\title{\textbf{Supersymmetric Derived Stacks}}
\author{Renaud Gauthier \footnote{rg.mathematics@gmail.com} \\ \\}
\theoremstyle{definition}
\newtheorem{ksModstSymmMonCat}{Theorem}[subsubsection]
\newtheorem{Xilim}{Lemma}[subsubsection]
\newtheorem{SLadj}[Xilim]{Proposition}
\newtheorem{fibr}[Xilim]{Proposition}
\newtheorem{Eta}[Xilim]{Proposition}
\newtheorem{SModStr}[Xilim]{Theorem}
\newtheorem{sCdMod}{Theorem}[subsubsection]
\newtheorem{sksModModcat}[sCdMod]{Corollary}
\newtheorem{grscat}{Definition}[subsubsection]
\newtheorem{sksModgrscat}[grscat]{Theorem}
\newtheorem{grSimpModCat}{Graded simplicial model category Axiom - grsModCat}[subsubsection]
\newtheorem{grsModCat}[grSimpModCat]{Definition}
\newtheorem{PropgrsModCat}[grSimpModCat]{Proposition}
\newtheorem{sksModgrsModCat}[grSimpModCat]{Theorem}
\newtheorem{Ghom}[grSimpModCat]{Lemma}
\newtheorem{sksModstSymmMonModCat}{Theorem}[subsubsection]
\newtheorem{der}{Definition}[subsection]
\newtheorem{MapDer}[der]{Proposition}
\newtheorem{LBAdef}[der]{Definition}
\newtheorem{formetale}{Definition}[subsection]
\newtheorem{etale}[formetale]{Definition}
\newtheorem{htpycart}{Definition}[subsection]
\newtheorem{resdiagr}{Definition}[subsection]
\newtheorem{ass1}{Assumption 1}[subsection]
\newtheorem{ass2}{Assumption 2}[subsection]
\newtheorem{ass3}{Assumption 3}[subsection]
\newtheorem{ass4}{Assumption 4}[subsection]
\DeclareMathOperator*{\colim}{\text{colim}}
\DeclareMathOperator*{\surjfleche}{\twoheadrightarrow}
\newcommand{\beq}{\begin{equation}}
\newcommand{\eeq}{\end{equation}}
\newcommand{\rarr}{\rightarrow}
\newcommand{\Ob}{\text{Ob\,}}
\newcommand{\surj}{\twoheadrightarrow}
\newcommand{\xrarr}{\xrightarrow}
\newcommand{\cA}{\mathcal{A}}
\newcommand{\cC}{\mathcal{C}}
\newcommand{\cH}{\mathcal{H}}
\newcommand{\cX}{\mathcal{X}}
\newcommand{\bL}{\mathbb{L}}
\newcommand{\bR}{\mathbb{R}}
\newcommand{\bZ}{\mathbb{Z}}
\newcommand{\Hom}{\text{Hom}}
\newcommand{\Ho}{\text{Ho}\,}
\newcommand{\Mor}{\text{Mor}\,}
\newcommand{\Map}{\text{Map}}
\newcommand{\op}{\text{op}}
\newcommand{\Set}{\text{Set}}
\newcommand{\Spec}{\text{Spec\,}}
\newcommand{\Top}{\text{Top}}
\newcommand{\uHom}{\underline{\Hom}}
\newcommand{\ACommC}{A\text{-Comm}(\cC)}
\newcommand{\AMod}{A\text{-Mod}}
\newcommand{\AsMod}{A\text{-sMod}}
\newcommand{\AffuC}{\text{Aff}_{\ucC}}
\newcommand{\AffC}{\cA \text{ff}_{\cC}}
\newcommand{\bTA}{\boxtimes_A}
\newcommand{\bT}{\boxtimes}
\newcommand{\BCommC}{B\text{-Comm}(\cC)}
\newcommand{\BsMod}{B\text{-sMod}}
\newcommand{\cDot}{\centerdot}
\newcommand{\cCstdot}{\cC_{*\cDot}}
\newcommand{\cst}{\text{cst}}
\newcommand{\cHom}{\cH \text{om}}
\newcommand{\covet}{\text{cov}_{\text{\'et}}}
\newcommand{\csAstsMod}{\text{cs}A_*\text{-sMod}}
\newcommand{\csBstMod}{\text{cs}B_*\dashMod}
\newcommand{\csBstsMod}{\text{cs}B_*\text{-sMod}}
\newcommand{\csAsMod}{\text{cs}A\text{-sMod}}
\newcommand{\Comm}{\text{Comm}}
\newcommand{\CommC}{Comm(\cC)}
\newcommand{\Dn}{\Delta^n}
\newcommand{\dashMod}{\text{-Mod}}
\newcommand{\dashsMod}{\text{-sMod}}
\newcommand{\Deltastdot}{\Delta_{*\centerdot}}
\newcommand{\dksAff}{\text{d}k\text{-sAff}}
\newcommand{\dksAfftet}{\dksAff^{\;\sim \, , \, \acute{e}t.}}
\newcommand{\Der}{\mathbb{D}\text{er}}
\newcommand{\DerA}{\mathbb{D}\text{er}_A}
\newcommand{\Dop}{\Delta^{\op}}
\newcommand{\eps}{\epsilon}
\newcommand{\fstdot}{f_{*\cDot}}
\newcommand{\Gstdot}{G_{*\cDot}}
\newcommand{\gstdot}{g_{*\cDot}}
\newcommand{\holim}{\text{holim}}
\newcommand{\hatdksAff}{\widehat{\dksAff}}
\newcommand{\hocolim}{\text{hocolim}\,}
\newcommand{\HomSd}{\Hom_{\Sd}}
\newcommand{\HomSstdot}{\Hom_{\Sstdot}}
\newcommand{\HomS}{\Hom_S}
\newcommand{\HomsC}{\Hom_{s\cC}}
\newcommand{\HomuC}{\Hom_{\underline{\cC}}}
\newcommand{\HomsuC}{\Hom_{s\underline{\cC}}}
\newcommand{\hstdot}{h_{*\cDot}}
\newcommand{\Hd}{H_{\cDot}}
\newcommand{\kMod}{k\text{-Mod}}
\newcommand{\kDAff}{\text{k-D}^{-}\text{Aff}}
\newcommand{\kDsAff}{k\text{-D}^-\text{sAff}}
\newcommand{\kDsAfftet}{\kDsAff^{\sim \, , \, \acute{e}t.}}
\newcommand{\ksMod}{k\text{-sMod}}
\newcommand{\ksAlg}{k\text{-sAlg}}
\newcommand{\LBA}{\mathbb{L}_{B/A}}
\newcommand{\LA}{\mathbb{L}_A}
\newcommand{\LB}{\mathbb{L}_B}
\newcommand{\LBipAip}{\mathbb{L}_{B_{ip}/A_{ip}}}
\newcommand{\LCM}{\text{L}_{\cC}M}
\newcommand{\Mhat}{M^{\wedge}}
\newcommand{\Mop}{M^{\op}}
\newcommand{\Mtet}{M^{\sim \,, \,\tet}}
\newcommand{\MapC}{\Map_{\cC}}
\newcommand{\oP}{\oplus}
\newcommand{\oPi}{\oP_{i=0,1}}
\newcommand{\oT}{\otimes}
\newcommand{\oTA}{\oT_A}
\newcommand{\oTAL}{\oTA^{\mathbb{L}}}
\newcommand{\PrZtwo}{\text{Pr}_{\mathbb{Z}_2}}
\newcommand{\pizeropr}{\pi_0^{\text{pr}}}
\newcommand{\pizeroeq}{\pi_0^{\text{eq}}}
\newcommand{\pDn}{\partial \Delta^n}
\newcommand{\RomuHom}{\bR_{\omega}\uHom}
\newcommand{\RpDn}{\mathbb{R}\pDn}
\newcommand{\RDn}{\mathbb{R}\Dn}
\newcommand{\Ruh}{\mathbb{R}\uh}
\newcommand{\Rjxstar}{\mathbb{R}j_x^*}
\newcommand{\Rjxipstar}{\bR j_{x_{ip}}^*}
\newcommand{\RuHom}{\bR \uHom}
\newcommand{\sPr}{\text{sPr}}
\newcommand{\SetD}{\Set_{\Delta}}
\newcommand{\skMod}{\text{s}k\text{-Mod}}
\newcommand{\skCAlg}{\text{s}k\text{-CAlg}}
\newcommand{\sAff}{s\text{Aff}}
\newcommand{\sPrZtwosqu}{\sPr_{\bZ^2_2}}
\newcommand{\Sd}{S_{\cDot}}
\newcommand{\Sstdot}{S_{* \centerdot}}
\newcommand{\sksMod}{\text{s}k\text{-sMod}}
\newcommand{\sAffC}{\sAff_{\cC}}
\newcommand{\sAffuC}{\sAff_{\ucC}}
\newcommand{\sksAlg}{\text{s}k\text{-sAlg}}
\newcommand{\scCstdot}{s\cCstdot}
\newcommand{\tet}{\acute{e}t.}
\newcommand{\Topstdot}{\Top_{* \centerdot}}
\newcommand{\tbT}{\tilde{\boxtimes}}
\newcommand{\tbTA}{\tbT_A}
\newcommand{\tbTAL}{\tbTA^{\mathbb{L}}}
\newcommand{\uh}{\underline{h}}
\newcommand{\uMhat}{\uM^{\wedge}}
\newcommand{\uHomC}{\underline{\Hom}_{\cC}}
\newcommand{\uhomC}{\underline{\hom}_{\cC}}
\newcommand{\uhom}{\underline{\hom}}
\newcommand{\uhomuC}{\uhom_{\ucC}}
\newcommand{\uHomsC}{\uHom_{s\cC}}
\newcommand{\uhomSstdot}{\uhom_{\Sstdot}}
\newcommand{\ucHom}{\underline{\cHom}}
\newcommand{\uM}{\underline{M}}
\newcommand{\uHomA}{\underline{\Hom}_A}
\newcommand{\ucC}{\underline{\cC}}
\newcommand{\Xstdot}{X_{* \cDot}}
\newcommand{\xstdot}{x_{*\cDot}}
\newcommand{\Ystdot}{Y_{* \cDot}}
\begin{document}
\maketitle
\begin{abstract}
Stacks have become a prevalent tool in studying problems with connections to String Theory, hence we see a need to develop a theory of supersymmetric stacks proper. We first define derived stacks on $\mathbb{Z}_2$-bi-graded $k$-modules (objects of $\sksMod_*$) following the exposition of Toen and Vezzosi on ungraded modules in \cite{TV} and \cite{TV4}. We then define $\Topstdot$-valued maps on those supermodules ($\Topstdot$ $\bZ_2$-bi-graded) and show how they behave under supersymmetry transformations in the base. For $\Psi: M \rarr \cX$ one such map, $M \in \sksMod_*$, $\cX \in \Topstdot$, we argue that defining $F(M) = \{\Psi(\sigma, \theta) \;|\; (\sigma, \theta) \in M \}$ with the induced topology, one can call $F$ a supersymmetric stack if it is a derived stack.
\end{abstract}

\newpage

\section{Introduction}
Moduli spaces have for a long time been a pervasive object in String Theory. Moduli stacks in particular have been very useful, and with the advent of Derived Algebraic Geometry (\cite{Lu}, \cite{TV}, \cite{TV4}), derived moduli stacks. Geometric Langlands is another avenue of research that has deep connections with Mathematical Physics (\cite{AT}, \cite{EY}), and makes heavy use of the stacks formalism. Naturally then it seems appropriate to develop a theory of supersymmetric stacks in its own right since that notion ought to surface at some point.\\

One typically starts with a superspace, a graded vector space. To be more general, one can consider graded modules instead, over a commutative ring $k$, giving rise to a category of super-$k$-modules $\ksMod$. We work in Derived Algebraic Geometry, so we enlarge that to the category of simplicial graded $k$-supermodules $\sksMod_*$. From that point onward we follow \cite{TV} and \cite{TV4} by showing $\sksMod_*$ can be made into a symmetric monoidal model category, with a notion of supercommutativity using the parity function on homogeneous elements that leads to defining $\sksAlg_* = \Comm(\sksMod_*)$ the category of simplicial graded $k$-superalgebras, the opposite category of which is the category of affine graded $k$-superschemes $\kDsAff_*$, on which we put a bi-graded version of the \'{e}tale topology of \cite{TV4}. We consider bi-graded stacks on this site, functors valued in $\Sstdot = (\SetD)_{2 \times 2}$, a bi-graded generalization of the notion of stacks on the ungraded site $(\kDAff, \tet)$. The passage from the classical theory as expounded in \cite{TV} and \cite{TV4} to the bi-graded setting is made possible precisely because of our choice of working with bi-graded objects, along with our choice of tensor product on $\sksMod_*$.\\

For each $M \in \sksMod_*$, $M = (M_{ip})$, $i$ referred to as the level index, $p$ the parity index, $\sigma = (\sigma^1, \sigma^2)$, $\sigma^i \in M_{i0}$, $\theta = (\theta^1, \theta^2)$, $\theta^A \in M_{A1}$, we consider maps $\Psi: M \rarr \cX$, $\cX = (\cX_{ip}) \in \Topstdot = (\Top)_{2 \times 2}$, $\cX$ determined by additional data such as constraint equations, $\Psi = X \oP \psi$, $X: M_0 \rarr \cX_0$, $\psi: M_1 \rarr \cX_1$, so that $X$ is a function of $\sigma = (\sigma^1, \sigma^2)$, and $\psi$ is a function of $\theta = (\theta^1, \theta^2)$. Instead of using the traditional supersymetry transformations, say those in \cite{GSW} for which $M_0 = < \sigma^1, \sigma^2>$, $M_1 = < \theta^1, \theta^2>$,  $\delta \sigma^{\alpha} = \eps^T \rho^{\alpha} \theta$, with $\eps$ anticommuting, $\delta \theta^A = \eps^A$, we introduce more general transformations that are fully symmetric, both algebraically, but also parity-wise, i.e. transformations $\delta = \delta_0 \oP \delta_1$, $\delta_0: M_0 \rarr M_1$ and $\delta_1: M_1 \rarr M_0$ such that $\delta_0 \sigma^{\alpha} = \eps^T \rho^{\alpha} \theta$, and $\delta_1 \theta^A = \lambda^T \gamma^A \sigma$, for infinitesimal commuting parameters $\eps$ and $\lambda$, and matrices $\rho$ and $\gamma$. We then show that $\Psi$ transforms via a pullback under $\delta$. \\

For $A \in \sksAlg_*$, define $F(A) = \{\Psi(\sigma, \theta) \, | \, \sigma \in A_0, \theta \in A_1 \}$ endowed with the subspace topology. Those functors $F$ that satisfy hyperdescent on $(\kDsAff_*, \tet)$ are called supersymmetric (derived) stacks.

\section{Simplicial Super algebra}
In a first time we remind the reader of various results pertaining to super-algebra that we will later generalize to the simplicial setting. Most of what is presented in the subsection below can be found in \cite{V}, and especially in \cite{CCF}, which we modify for our purposes.

\subsection{Super Algebra}
This subsection covers some basic conventions about superalgebra. We fix $k$ a commutative ring. We define a super $k$-module (also referred to as $k$-supermodule) to a be a $\mathbb{Z}_2$-graded $k$-module $M = M_0 \oplus M_1$, endowed with a parity function $|\;|$ defined by:
\beq
|x|=
\left\{
  \begin{array}{ll}
    0, & \hbox{$x \in M_0$} \\
    1, & \hbox{$x \in M_1$}
  \end{array}
\right. \nonumber
\eeq
There is a (distributive) tensor product $\otimes$ on the set of $k$-supermodules $\ksMod$, for which the commutativity map $\sigma$ (also referred to as braiding map), defined on objects by $\sigma_{M,N}: M \otimes N \rarr N \otimes M$, $M = M_0 \oplus M_1$, $N=N_0 \oplus N_1$, satisfies:
\beq
\sigma_{M,N}(x \otimes y) = (-1)^{|x||y|} y \otimes x \nonumber
\eeq
on homogeneous elements, and is extended by linearity. Morphisms of super modules are defined to be graded morphisms, or equivalently morphisms of graded objects of degree zero (\cite{McL0}). In other terms if $f: M \rarr N$ is a morphism of super $k$-modules, then it decomposes into two morphisms $f_0: M_0 \rarr N_0$ and $f_1: M_1 \rarr N_1$. We denote by $\Hom_k(M, N)$, or simply by $\Hom(M,N)$ the set of graded morphisms from $M$ to $N$. It is a $k$-module. Each super module $M$ has an identity $id_M = id_{M_0} \oplus id_{M_1} \in \Hom(M,M)$ that acts as a left and a right identity for composition. With obvious notations, composition is defined by $f \circ g = f_0 \circ g_0 \oP f_1 \circ g_1$, and is associative. This makes $\ksMod$ into a category. \\

The identity for the tensor product is $k$, and we have left and right identity maps:
\begin{align}
\lambda:\; & k \otimes x \xrightarrow{\cong} x \nonumber \\
\rho: \;&x \otimes k \xrightarrow{ \cong} x \nonumber
\end{align}
with $x \in M$, $M$ an object of $\ksMod$. Here for instance $k\otimes (x_0 \oplus x_1) = (k \otimes x_0) \oplus (k \otimes x_1)$, so that we can write $\lambda = \lambda_0 \oP \lambda_1$ and $\rho = \rho_0 \oP \rho_1$, $\lambda_i$ and $\rho_i$ the classical left and right identity maps in $\kMod$ respectively. We regard elements of $k$ to be of degree zero so that they preserve parity with the tensor product. The associator is defined by:
\beq
\alpha_{M,N,P}^{-1}: M \otimes (N \otimes P) \xrightarrow{\cong} (M \otimes N) \otimes P \nonumber
\eeq
and satisfies the pentagon coherence diagram (\cite{McL}):
\beq
\xymatrix{
&(M \otimes N) \otimes (P \otimes Q) \ar[dr]^{\alpha} \\
((M \otimes N) \otimes P) \otimes Q \ar[ur]^{\alpha} \ar[d]_{\alpha \oT Q} &&M \otimes (N \otimes (P \otimes Q)) \\
(M \otimes (N \otimes P)) \otimes Q \ar[rr]_{\alpha} &&M \otimes ((N \otimes P) \otimes Q) \ar[u]_{M \oT \alpha} } \nonumber
\eeq
\noindent
as well as the triangle coherence diagrams:
\beq
\xymatrix{
(M\otimes k) \otimes N \ar[dr]_{\rho \otimes N} \ar[rr]^{\alpha} &&M \otimes ( k \otimes N) \ar[dl]^{M \otimes \lambda}\\
&M \otimes N } \nonumber
\eeq
\noindent
and the bigon relations $\rho_k = \lambda_k$. \\

The braiding map $\sigma$ is required to satisfy the hexagon coherence condition:
\beq
\xymatrix{
&M \otimes (N \otimes P) \ar[r]^{\sigma^{\pm 1}} &(N \otimes P) \otimes M \ar[dr]^{\alpha} \\
(M \otimes N) \otimes P \ar[ur]^{\alpha} \ar[dr]_{\sigma^{\pm 1} \otimes P} &&&N \otimes (P \otimes M) \\
&(N \otimes M)\otimes P \ar[r]_{\alpha} &N \otimes (M \otimes P) \ar[ur]_{N \otimes \sigma^{\pm 1}} } \nonumber
\eeq
We investigate what does the parity function have to satisfy for this to hold: we start with an element $(x \otimes y) \otimes z$ in $(M \otimes N) \otimes P$, with $x = x_0 \oP x_1$, $y = y_0 \oP y_1$ and $z = z_0 \oP z_1$. Thus, abbreviating tensor products $x \oT y$ by $xy$ and direct sums $x \oP y$ by $x+y$ for simplicity of notation:
\begin{align}
(x \oT y) \oT z & = [(x_0 \oP x_1) \oT (y_0 \oP y_1)] \oT  z \nonumber \\
&=[x_0y_0 + x_0 y_1 + x_1 y_0 + x_1y_1] \oT z \nonumber
\end{align}
maps to:
\beq
[y_0x_0 + y_1 x_0 + y_0x_1 - y_1 x_1] \oT z \nonumber
\eeq
under $\sigma \otimes z$, which then maps to:
\beq
y_0(x_0 \otimes z) + y_1(x_0 \otimes z) + y_0(x_1 \oT z) - y_1(x_1 \oT z) \nonumber
\eeq
under $\alpha$. This expands as:
\beq
y_0(x_0z_0 + x_0z_1) + y_1(x_0z_0+x_0z_1) + y_0(x_1z_0 + x_1z_1) - y_1(x_1z_0 + x_1z_1) \nonumber
\eeq
and this maps under $1 \oT \sigma$ to:
\beq
y_0(z_0x_0+z_1x_0) + y_1(z_0x_0 + z_1x_0) + y_0(z_0x_1-z_1x_1) - y_1(z_0x_1-z_1x_1) \label{Rder}
\eeq
Now starting from the same object $(x \oT y) \oT z$, under $\alpha$ this maps to:
\beq
(x_0+x_1) \oT [y_0z_0 + y_0z_1 + y_1z_0 + y_1z_1] \nonumber
\eeq
which expands as:
\beq
x_0 \oT y_0z_0 + x_0 \oT y_0z_1 + x_0 \oT y_1z_0 + x_0 \oT y_1z_1 + x_1 \oT y_0z_0 + x_1 \oT y_0z_1 + x_1 \oT y_1z_0 + x_1 \oT y_1z_1 \nonumber
\eeq
Now if we have:
\beq
|x \oT y| = |x| + |y| \label{xty}
\eeq
then this expansion maps to:
\beq
y_0z_0 \oT x_0 + y_0z_1 \oT x_0 + y_1z_0 \oT x_0 + y_1z_1 \oT x_0 + y_0z_0 \oT x_1 - y_0z_1 \oT x_1 - y_1z_0 \oT x_1 + y_1z_1 \oT x_1 \nonumber
\eeq
under $\sigma$, which itself maps to:
\beq
y_0(z_0x_0) + y_0(z_1x_0) + y_1(z_0x_0) + y_1(z_1x_0) + y_0(z_0x_1) - y_0(z_1x_1) - y_1(z_0x_1) + y_1(z_1x_1) \nonumber
\eeq
under $\alpha$, and this is exactly \eqref{Rder}, showing that the hexagonal diagram does commute with the definition of the parity on tensor products given by \eqref{xty}. To see that \eqref{xty} is indeed the correct relation, it suffices to circle the hexagon diagram both ways by starting from a same element $(x \oT y) \oT z$, $x$, $y$, $z$ homogeneous elements, as in the diagram below:
\beq
\xymatrix{
x \oT ( y \oT z) \ar[r]^{\sigma^{\pm 1}} &(-1)^{|y \oT z||x|}(y \oT z) \oT x \ar[dr]^{\alpha}\\
(x \oT y) \oT z \ar[u]^{\alpha} \ar[dd]_{\sigma^{\pm 1} \otimes z} &&(-1)^{|y \oT z||x|} y \oT (z \oT x) \ar@{=}[d] \\
&&(-1)^{|y||x|} (-1)^{|x||z|}y \oT (z \oT x)\\
(-1)^{|y||x|}(y \oT x) \oT z \ar[r]_{\alpha} &(-1)^{|y||x|}y \oT (x \oT z) \ar[ur]_{y \otimes \sigma^{\pm 1}}}\nonumber
\eeq
It follows from \eqref{xty} that if the parity function is calculated mod 2, the tensor product on $\ksMod$ is defined by:
\begin{align}
(M \otimes N)_0 &= (M_0 \otimes N_0) \oplus (M_1 \otimes N_1) \nonumber \\
(M \otimes N)_1 &= (M_0 \otimes N_1) \oplus (M_1 \otimes N_0) \nonumber
\end{align}
where $M_i \otimes N_j$, $i,j=0,1$ is the usual tensor product in $\kMod$ on the underlying $k$-modules. Finally, the braiding $\sigma = \sigma_0 \oP \sigma_1$ on $\ksMod$ satisfies $\sigma^2 = \sigma_0^2 \oP \sigma_1^2 = 1$. Indeed:
\begin{align}
	M \otimes N \ni x \otimes y &= x_0y_0 + x_1y_1 \xrarr{\sigma_0} y_0x_0 - y_1 x_1 \xrarr{\sigma_0} x_0y_0 -(-x_1y_1) \in (N \otimes M)_0 \nonumber \\
	& + x_0y_1 + x_1y_0 \xrarr{\sigma_1} +y_1x_0 +y_0x_1 \xrarr{\sigma_1} x_0y_1 + x_1y_0 \in (N \otimes M)_1 \nonumber
\end{align}
where we have omitted tensor products for ease of reading. Further:
\begin{align}
\rho_M &= \rho_{M,0} \oP \rho_{M,1} \nonumber \\
	&= \oPi \lambda_{M,i} \circ (\sigma_{M,k})_i \nonumber \\
 &= \lambda_M \circ \sigma_{M,k} \nonumber
\end{align}
meaning, taking $k$ to have even parity, the following diagram commutes:
\beq
\xymatrix{
M \oT k \ar[dr]_{\rho_M} \ar[rr]^{\sigma_{M,k}} && k \oT M  \ar[dl]^{\lambda_M} \nonumber \\
&M \nonumber
}
\eeq
At this point we have shown that $(\ksMod, \oT_k, \alpha, \lambda, \rho, k)$ is a symmetric monoidal category.\\

\subsection{Diagonal super algebra}
\subsubsection{Box tensor product $\bT$}
In order to develop a theory within an Algebraic Geometry context over graded $k$-modules, we need commutative monoids whose definition really makes sense only if we take a diagonal version of $(\ksMod, \oT_k, \alpha, \lambda, \rho, k)$. One way to achieve this is by considering $\ksMod_*$, the category of graded $k$-supermodules. We take the grading to be $\bZ_2$ as well. An object of $\ksMod_*$ is of the form $M = M_0 \oplus M_1$, $M_i = M_{i0} \oplus M_{i1}$ for $i=0,1$, where objects of $M_{ip}$ have parity $p$. Thus one can write $M = \oplus_{i,p = 0,1}M_{ip}$, $i$ referring to the graded index, or level, and $p$ referring to parity. We will adopt the shorthand matricial notation $M = (M_{ip})$ to denote such objects. It follows that we will refer to the components $M_{ip}$ as entries of the object $M \in \ksMod_*$. In the same manner, morphisms $f: M \rarr N$ in $\ksMod_*$ are defined entrywise: $f_{ip}: M_{ip} \rarr N_{ip}$. The tensor product on $\ksMod_*$ is a diagonal tensor product defined by:
\beq
M \bT N = (M_0 \otimes N_0) \oplus (M_1 \otimes N_1) \nonumber
\eeq
where the tensor products used on the right hand side are those of $\ksMod$. Observe that this is performed levelwise and each level is closed under $\otimes$: $M_{ip} \otimes N_{iq} \in \ksMod_i$, $i=0,1$. It follows $M_i \otimes N_i = (M \bT N)_i$. The advantage of having such a definition of the tensor product on $\ksMod_*$ is that one can use classical module theory with elements that have a parity. We have a braiding:
\beq
\sigma_{M,N}: M \bT N \rarr N \bT M \nonumber
\eeq
defined levelwise on supermodules:
\beq
\sigma_{M,N}: (M_0 \otimes N_0) \oplus (M_1 \otimes N_1) \xrarr{\sigma_0 \oplus \sigma_1} (N_0 \otimes M_0) \oplus (N_1 \otimes M_1) = N \bT M \nonumber
\eeq
with $\sigma_i = \sigma_{i0} \oplus \sigma_{i1}: M_i \otimes N_i \rarr N_i \otimes M_i$. To be more precise:
\begin{align}
	\sigma_{i0}:& (M_{i0} \otimes N_{i0}) \oplus (M_{i1} \otimes N_{i1}) \rarr (N_{i0} \otimes M_{i0}) \oplus (N_{i1} \otimes M_{i1}) \nonumber \\
	\sigma_{i1}:& (M_{i0} \otimes N_{i1}) \oplus (M_{i1} \otimes N_{i0}) \rarr (N_{i1} \otimes M_{i0}) \oplus (N_{i0} \otimes M_{i1}) \nonumber
\end{align}

\subsubsection{$\ksMod_*$ symmetric monoidal category}

Recall that classically morphisms of supermodules $M \rarr N$ are graded morphisms $f = f_0 \oP f_1$, $f_0: M_0 \rarr N_0$, $f_1: M_1 \rarr N_1$. In the bi-graded case with the box tensor product defined above, morphisms in $\ksMod_*$ are defined levelwise, and on each level, morphisms are morphisms in $\ksMod$. Thus $f: M \rarr N$ in $\ksMod_*$ decomposes as $f = f_0 \oplus f_1$, with $f_i: M_i \rarr N_i$, $i=0,1$, and for $i$ fixed, $f_i = f_{i0} \oplus f_{i1}$, with $f_{ip}: M_{ip}\rarr N_{ip}$, with $p = 0,1$. Thus $f = \oplus_{i,p = 0,1} f_{ip}$, which we can represent matricially by $ f = (f_{ip})$. \\

We still denote by $\Hom(M,N)$ the set $\Hom_{\ksMod_*}(M,N)$. We have $\Hom_{\ksMod_*} = \Hom_{(\ksMod_0, \oT)} \oP \Hom_{(\ksMod_1, \oT)}$. It is a $k$-module. For any $M \in \ksMod_*$, $id_M = id_{M_0} \oP id_{M_1} \in \Hom(M,M)$. Composition is associative. This makes $\ksMod_*$ into a category. We have:
\begin{align}
	&\lambda = \lambda_0 \oP \lambda_1: \; \Delta k \bT x = (k \otimes x_0) \oplus (k \otimes x_1) \xrightarrow{\cong} x   \nonumber \\
&\rho = \rho_0 \oP \rho_1: \;  x \bT \Delta k \xrightarrow{\cong} x   \nonumber \\
&\alpha_{M,N,P}: \;  (M \bT N) \bT P \xrightarrow{\cong} M \bT ( N \bT P) \nonumber
\end{align}
To be more precise:
\begin{align}
\alpha_{M,N,P}: &(M \bT N) \bT P \nonumber \\
=& \oPi (M \bT N)_i \oT P_i \nonumber \\
=& \oPi (M_i \oT N_i) \oT P_i \xrightarrow{\oPi \alpha_i} \oPi M_i \oT (N_i \oT P_i) = M \bT (N \bT P) \nonumber
\end{align}
with $\alpha_0$ and $\alpha_1$ associators in $\ksMod$, so we do have:
\beq
\alpha = \alpha_0 \oP \alpha_1: (M \bT N ) \bT P \xrightarrow{\cong} M \bT ( N \bT P) \nonumber
\eeq
We have a pentagon diagram:
\beq
\xymatrix{
(M \bT N) \bT (P \bT Q) \ar@{=}[d] \\
\oPi (M_i \oT N_i) \oT (P_i \oT Q_i) \ar[rdd]^{\oPi \alpha_i} \\
\oPi ((M_i \oT N_i) \oT P_i) \oT Q_i \ar[u]^{\oPi \alpha_i} \ar@{=}[d] &M \bT ( N \bT (P \bT Q)) \ar@{=}[d]\\
((M \bT N) \bT P) \bT Q \ar[d]_{\oPi \alpha_i \oT Q_i} & \oPi M_i \oT(N_i \oT(P_i \oT Q_i)) \\
\oPi (M_i \oT(N_i \oT P_i)) \oT Q_i \ar@{=}[d] \ar[r]_{\oPi \alpha_i} & \oPi M_i \oT ((N_i \oT P_i) \oT Q_i) \ar[u]_{\oPi M_i \oT \alpha_i} \ar@{=}\\
(M \bT (N \bT P)) \bT Q & M \bT((N \bT P) \bT Q) \ar@{=}[u] \nonumber
}
\eeq
and triangle coherence diagrams:
\beq
\xymatrix{
(M \bT \Delta k ) \bT N \ar@{=}[d]  &&M \bT (\Delta k \bT N) \ar@{=}[d] \\
\oPi (M_i \oT k) \oT N_i \ar[rr]^{\oPi \alpha_i} \ar[rd]_{\oPi \rho_i \oT N_i} &&\oPi M_i \oT (k \oT N_i) \ar[ld]^{\oPi M_i \oT \lambda_i} \\
&\oPi M_i \oT N_i \ar@{=}[d] \\
& M \bT N \nonumber
}
\eeq
with bigons $\rho_{\Delta k} = \lambda_{\Delta k}$. For the hexagonal coherence condition:
\beq
\xymatrix{
 M \bT (N \bT P) \ar@{=}[d] \ar@{-->}[r] & (N \bT P) \bT M \ar@{=}[d] \\
\oPi M_i \oT(N_i \oT P_i) \ar[r]^{\oPi \sigma_i} & \oPi (N_i \oT P_i) \oT M_i \ar[d]^{\oPi \alpha_i} \\
(M \bT N ) \bT P = \oPi (M_i \oT N_i) \oT P_i  \ar@{-->}@/_5pc/[dd] \ar[u]^{\oPi \alpha_i} \ar[d]^{\oPi \sigma_i \oT P_i} & \oPi N_i \oT( P_i \oT M_i) = N \bT (P \bT M)  \\
\oPi (N_i \oT M_i) \oT P_i \ar@{=}[d] \ar[r]^{\oPi \alpha_i} & \oPi N_i \oT(M_i \oT P_i) \ar@{=}[d] \ar[u]^{\oPi N_i \oT \sigma_i} \\
 (N \bT M) \bT P \ar@{-->}[r] & N \bT (M \bT P) \ar@{-->}@/_5pc/[uu] \nonumber
}
\eeq
with $\sigma^2 = 1$ and $\lambda_M = \rho_M \circ \sigma_{M, \Delta k}$ for symmetry. We have proved:
\begin{ksModstSymmMonCat}
	$(\ksMod_*, \bT, \alpha, \lambda, \rho, \Delta k)$ is a symmetric monoidal category.
\end{ksModstSymmMonCat}

\subsubsection{Commutative monoids in $\ksMod_*$}
A monoid in $\cC = \ksMod_*$ is an object $M$ of $\cC$ with an associative binary operation $\mu: M \bT M \rarr M$, $\mu = \mu_0 \oP \mu_1$, $\mu_i$ the ordinary binary operation in $\ksMod_i$ for $i = 0,1$. To be more precise:
\begin{align}
	\mu_i: M_i \otimes M_i &\rarr M_i \nonumber \\
	m_i \otimes m_i' & \mapsto m_im_i' \nonumber
\end{align}
where $\mu_i$ and $m_i \otimes m_i'$ decompose as:
\beq
(m_{i0} \otimes m_{i0}') \oplus (m_{i1} \otimes m_{i1}') \xrarr{\mu_{i0}} m_{i0} m_{i0}' \oplus m_{i1} m_{i1}' \nonumber 
\eeq
and:
\beq
(m_{i0} \otimes m_{i1}') \oplus (m_{i1} \otimes m_{i0}') \xrarr{\mu_{i1}} m_{i0} m_{i1}' \oplus m_{i1} m_{i0}' \nonumber 
\eeq
since each $\mu_{ip}$ is defined on homogeneous elements. The terms on the right hand side recombine as $(m_im_i')_0 \oplus (m_im_i')_1 = m_i m_i'$. \\

We also have a unit map $ \eta: \Delta k \rarr M$, $\eta = \eta_0 \oP \eta_1$, $\eta_i$ the unit in $\ksMod_i$ for $i = 0,1$. $\mu$ and $\eta$ satisfy, as in \cite{McL}, the coherence diagrams:
\beq
\xymatrix{
(M \bT M) \bT M \ar@{=}[d] & M \bT (M \bT M) \ar@{=}[d] & M \bT M \ar@{=}[d]\\
\oPi (M_i \oT M_i) \oT M_i \ar[d]_{\oPi \mu_i \oT M_i} \ar[r]^{\oPi \alpha_i} & \oPi M_i \oT (M_i \oT M_i) \ar[r]^-{\oPi M_i \oT \mu_i} & \oPi M_i \oT M_i \ar[d]^{\oPi \mu_i}\\
\oPi M_i \oT M_i = M \bT M\ar[rr]_{\oPi \mu_i} &&\oPi M_i = M } \nonumber
\eeq

and:
\beq
\xymatrixcolsep{5pc}
\xymatrix{
\Delta k \bT M \ar@{=}[d] \ar[r]^{\eta \bT M} & M \bT M \ar@{=}[d] & M \bT \Delta k \ar[l]_{M \bT \eta} \ar@{=}[d] \\
\oPi k \oT M_i  \ar[dr]_{\oPi \lambda_i} \ar[r]^{\oPi \eta_i \oT M_i} &\oPi M_i \oT M_i \ar[d]^{\oPi \mu_i} & \oPi M_i\oT k \ar[l]_{\oPi M_i \oT \eta_i} \ar[dl]^{\oPi \rho_i} \\
&\oPi M_i = M } \nonumber
\eeq
where $f \bT g = \oPi f_i \oT g_i$.\\

A monoid $A$ in $\ksMod_*$ is said to be super-commutative, which we will just refer to as being commutative, if $\mu \circ \sigma = \mu$, i.e. if levelwise, for all $x,y$ homogeneous in $A_i$ for $i = 0,1$, we have:
\beq
xy = \mu(x \oT y) = (-1)^{|x||y|} yx \nonumber
\eeq
Another way of saying this is $A \in \ksMod_*$ is supercommutative if it is so level wise.\\

We define the category of graded $k$-super algebras to be the category of commutative monoids (with commutativity defined by $\sigma$) in $\ksMod_*$:
\beq
\ksAlg_* = \Comm (\ksMod_*) \nonumber
\eeq
For $A \in \ksAlg_*$, we denote by $\AsMod_*$ the category of elements of $\ksMod_*$ that are $A$-modules in the classical sense, where the action of $A$ on $M \in \ksMod_*$ is defined levelwise. A morphism of $A$-modules is a morphism $f: M \rarr N$ of graded supermodules such that $f(am) = af(m)$ for $a \in A$, $m \in M$. More specifically, $am$ is defined via:
\begin{align}
\mu: A \bT M &\rarr M \nonumber \\
a \bT m & \mapsto am \nonumber
\end{align}
with $A \bT M = (A_0 \oT M_0) \oP (A_1 \oT M_1)$, $\mu = \mu_0 \oP \mu_1$ as we saw above, so that $am = a_0m_0 \oP a_1 m_1$, with $a_im_i \in M_i$, $i = 0,1$. Then $f(am) = af(m)$ reads:
\begin{align}
f(a_0m_0 \oP a_1 m_1) &= f_0(a_0m_0) \oP f_1(a_1m_1) \nonumber \\
&= a_0 f_0(m_0) \oP a_1 f_1(m_1) \nonumber \\
&=a_0f(m)_0 \oP a_1 f(m)_1 \nonumber \\
&=af(m) \nonumber
\end{align}
where $f_i \in \Mor(A_i \dashsMod_i)$ for $i = 0,1$. We denote by $\Hom_A(M,N)$ the set of morphism of $A$-modules. There is a tensor product on $\AsMod_*$ defined by:
\beq
M \bTA N = M \bT_k N / \sim \nonumber
\eeq
where the equivalence relation $\sim$ is defined by $ma \bT n = m \bT an$ for $a \in A$, $m \in M$, $n \in N$. However we will use a simplified product that will allow us to use the work of Toen and Vezzosi; we define:
\begin{align}
	X \tbTA Y = &\oPi [ \colim(X_{i0} \oT A_{i0} \oT Y_{i0} \rarr X_{i0} \oT Y_{i0}) \nonumber \\
	& \oP \colim( X_{i1} \oT A_{i1} \oT Y_{i1} \rarr X_{i0} \oT Y_{i1} \vee X_{i1} \oT Y_{i0})] \nonumber
\end{align}
that is:
\beq
X \tbTA Y = \oP_{i,p=0,1} X_{ip} \oT_{A_{ip}} Y_{ip} \nonumber
\eeq
which is fullly diagonalized.\\

\subsection{Simplicial Generalization}
We now define the simplicial counterparts to all the above definitions. We will not worry about universe considerations, and if needed they can be transcribed to our case from \cite{TV} and \cite{TV4}. We denote by $\sksMod_*$ the category of simplicial objects in $\ksMod_*$:
\beq
\sksMod_* = (\ksMod_*)^{\Delta^{\op}} \nonumber
\eeq
whose objects will be referred to as simplicial graded $k$-supermodules. We endow this category with the (simplicial) levelwise tensor product $\bT$, that is for $M,N \in \sksMod_*$, we have $M \bT N = \oP_{n \geq 0} M_n \bT N_n$, where:
\beq
M_n \bT N_n = \oP_{i= 0,1} M_{n,i} \oT_k N_{n,i} \nonumber
\eeq
We will put a model structure on this category. Following \cite{GoJa} it becomes evident that we will need a notion of model category structure on the category of bi-graded simplicial sets. We do this first.

\subsubsection{Model category of $\mathbb{Z}_2$-bi-graded simplicial sets}
In this subsection, we define a model category structure on the category of bi-graded simplicial sets, following \cite{GoJa}. For completeness' sake, we briefly remind the reader of a few results regarding simplicial sets, all of which can be found in \cite{GoJa}. Recall that a map $p: X \rarr Y$ of simplicial sets is said to be a fibration if for any commutative diagram:
\beq
\xymatrix{
\Lambda^n_k \ar@{^{(}->}[d]_i \ar[r] &X \ar[d]^p\\
\Dn \ar[r] \ar@{.>}[ru] &Y} \nonumber
\eeq
where as usual $\Dn = \Hom_{\Delta}(-,[n])$, there is a dotted map $ \Dn \rarr X$ making the above diagram commute. A fibrant simplicial set, or Kan complex, is a simplicial set $X$ for which $X \rarr *$ is a fibration, it being the unique map to the final object $*$. For $X$ a fibrant simplicial set, $v \in X_0$, we define the simplicial homotopy group $\pi_n(X,v)$ for $n \geq 1$ to be the set of homotopy classes of maps $\alpha: \Dn \rarr X$ rel $\partial \Dn$ that fit in diagrams of the form:
\beq
\xymatrix{
\Dn \ar[r]^\alpha &X \\
\partial \Dn \ar@{_{(}->}[u] \ar[r] &X_0 \ar[u]_v} \nonumber
\eeq
and define $\pi_0(X)$ to be the set of homotopy classes of vertices of $X$. A cofibration is an inclusion of simplicial sets. An equivalence is defined as follows: $f: X \rarr Y$ between fibrant simplicial sets is said to be a weak equivalence if for any vertex $x$ of $X$, $f_*: \pi_n(X,x) \rarr \pi_n(Y, f(x))$ is an isomorphism for all $n \geq 1$ and $f_*: \pi_0(X) \rarr \pi_0(Y)$ is a bijection.\\

We now define all those concepts in the bi-graded case. Let $\Topstdot$ denote the category of $\mathbb{Z}_2$-bi-graded topological spaces. Denote by $\Sstdot$ the category of $\mathbb{Z}_2$-bi-graded simplicial sets. We define the realization functor $|\;|: \Sstdot \rarr \Topstdot$ entrywise. Introduce the bi-graded simplex category $\Deltastdot \downarrow X_{*\centerdot}$ of a bi-graded simplicial set $X_{*\centerdot}$, whose objects are bi-graded maps which entry-wise read $\sigma_{ip}: \Dn \rarr X_{ip}$, $i,p=0,1$.\\

An arrow of $\Deltastdot \downarrow X_{*\centerdot}$ consists of commutative diagrams of simplicial maps for $i,p=0,1$:
\beq
\xymatrix{
	\Dn \ar[dr]^{\sigma_{ip}} \ar[dd]_{\theta_{ip}} \\
&X_{ip} \\
\Delta^m \ar[ur]_{\tau_{ip}}} \nonumber
\eeq
We have the following result from \cite{GoJa}:
\begin{Xilim}
	$X_{ip} \cong \colim_{\substack{\Dn \rarr X_{ip} \\ \text{ in } \Delta \downarrow X_{ip}}} \Dn \in \SetD$.
\end{Xilim}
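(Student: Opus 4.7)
The plan is to reduce this to the classical (ungraded) result from \cite{GoJa} that any simplicial set is canonically the colimit of its simplices, and then apply that componentwise in the $\mathbb{Z}_2$-grading. Concretely, for each fixed $i \in \{0,1\}$, I would first show that the forgetful/projection functor
\[
\pi_i : \Deltad \downarrow \Xd \longrightarrow \Delta \downarrow X_i
\]
sending a pair $(\sigma_0: \Delta^n \to X_0,\, \sigma_1: \Delta^n \to X_1)$ to its $i$-th component $\sigma_i$, together with the analogous action on the morphisms (which by the definition of arrows in $\Deltad \downarrow \Xd$ are already pairs of commuting triangles indexed by $i=0,1$), is an equivalence of categories onto $\Delta \downarrow X_i$ on its image, or at the very least is cofinal with respect to the diagram $\Delta^n \mapsto \Delta^n$ composed on either side.

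Next I would invoke the ungraded lemma from \cite{GoJa}: for an ordinary simplicial set $Y$, one has a canonical isomorphism $Y \cong \colim_{\Delta^n \to Y \text{ in } \Delta \downarrow Y} \Delta^n$, with the cocone given by the tautological maps $\sigma: \Delta^n \to Y$ themselves. Applied to $Y = X_i$, this yields precisely the stated formula
\[
X_i \;\cong\; \colim_{\substack{\Delta^n \to X_i \\ \text{in } \Delta \downarrow X_i}} \Delta^n,
\]
and then pulling back along $\pi_i$ (or, equivalently, using cofinality of $\pi_i$) rewrites the indexing category as $\Deltad \downarrow \Xd$ without changing the colimit. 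The explicit cocone is the tautological one: each object $(\sigma_0,\sigma_1)$ of $\Deltad \downarrow \Xd$ contributes its $i$-th component $\sigma_i : \Delta^n \to X_i$, and compatibility under morphisms in $\Deltad \downarrow \Xd$ is automatic because such morphisms are by definition pairs of commuting triangles, one of which lives over $X_i$.

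To verify the universal property, I would take any cocone $\{ \tau^{(\sigma_0,\sigma_1)} : \Delta^n \to Z \}$ in $\sSet$ indexed by $\Deltad \downarrow \Xd$ and argue that its value depends only on $\sigma_i$: two pairs with the same $i$-th component can be connected through morphisms in $\Deltad \downarrow \Xd$ in which the $i$-th triangle is the identity, forcing the cocone values to coincide. This factors the cocone through $\pi_i$, and then the ungraded colimit property of $X_i$ gives the unique factorization $X_i \to Z$. The main (and essentially only) obstacle is this cofinality/projection step, which amounts to checking that nothing in the odd component can obstruct colimits computed in the even component and vice versa; once that bookkeeping is in place, the result reduces immediately to the classical statement.
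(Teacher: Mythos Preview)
The paper does not actually supply a proof of this lemma: it is introduced with ``We have the following result from \cite{GoJa}'' and is simply the classical fact that any simplicial set is the colimit of its simplices, stated for each component $X_i$ separately. In particular, the indexing ``$\Delta^n \to X_i$ in $\Deltad \downarrow \Xd$'' is meant to be read as the ordinary simplex category $\Delta \downarrow X_i$; the subscript $\centerdot$ is purely notational, recording that $i$ ranges over $\{0,1\}$. So the paper's ``proof'' is a one-line citation, and no cofinality argument is needed.

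Your approach instead reads $\Deltad \downarrow \Xd$ as a category of \emph{pairs} $(\sigma_0,\sigma_1)$ with a common source $\Delta^n$, and then tries to project onto $\Delta \downarrow X_i$ and invoke cofinality. This introduces a genuine gap. First, if $X_{1-i}=\emptyset$ (which is allowed in $\Sd$, the setting of this subsection), there are no such pairs at all, the indexing category is empty, and the colimit is $\emptyset \ncong X_i$; so under your reading the statement is simply false. Second, even when both components are nonempty, your key step---``two pairs with the same $i$-th component can be connected through morphisms in $\Deltad \downarrow \Xd$ in which the $i$-th triangle is the identity''---is asserted but not verified, and in general there is no reason two arbitrary simplices $\alpha,\beta:\Delta^n\to X_{1-i}$ are connected by such a zigzag over the identity of $\Delta^n$. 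The fix is to adopt the paper's intended componentwise reading, after which the lemma is literally the Goerss--Jardine statement and no further argument is required.
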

Using the following definition on usual simplicial sets:
\beq
|X_{ip}| = \colim_{\substack{\Dn \rarr X_{ip} \\ \text{ in } \Delta \downarrow X_{ip}}} |\Dn| \nonumber
\eeq
we define the realization $|X_{*\centerdot}|$ of a bi-graded simplicial set $X_{*\centerdot}$ to be $|X_{*\centerdot}| = (|X_{ip}|) \in \Topstdot$.\\

We define the singular functor $S: \Topstdot \rarr \Sstdot$ entrywise. For $T_{*\cDot} \in \Topstdot$, $S(T_{*\cDot}) = (S(T_{ip}))$, where $S(T_{ip})$, $i,p=0,1$ is the simplicial set given by:
\beq
n \mapsto \Hom_{\Top}(|\Dn|, T_{ip}) \nonumber
\eeq
and $|\Dn|$ is the standard $n$-simplex:
\beq
|\Dn| = \{ (t_0, \cdots, t_n) \in \mathbb{R}^{n+1} \, | \, \sum_{i=0}^n t_i = 1 \, ; \, t_i \geq 0 \} \nonumber
\eeq
\begin{SLadj}
	For all $X_{*\cDot} \in \Sstdot$, $Y_{*\cDot} \in \Topstdot$, we have $\Hom_{\Topstdot}(|X_{*\cDot}|, Y_{*\cDot}) \cong \Hom_{\Sstdot}(X_{*\cDot}, SY_{*\cDot})$, that is $|\;| \dashv S$.
\end{SLadj}
\begin{proof}
It suffices to write:
\begin{align}
	\Hom_{\Topstdot}(|X_{*\cDot}|, Y_{*\cDot}) &= \oplus_{i,p = 0,1} \Hom_{\Top}(|X_{ip}|, Y_{ip}) \nonumber \\
	&\cong \oplus_{i,p = 0,1} \Hom_{\SetD}(X_{ip}, SY_{ip}) \nonumber \\
	& = \Hom_{\Sstdot}(X_{*\cDot}, SY_{*\cDot}) \nonumber
\end{align}
\end{proof}
We define a map $p_{*\cDot}: X_{*\cDot} \rarr Y_{*\cDot}$ of bi-graded simplicial sets to be a fibration if for $i,q=0,1$ the maps $p_{iq}: X_{iq} \rarr Y_{iq}$ are fibrations, i.e. if it is so entrywise. Thus a fibrant bi-graded simplicial set, or bi-graded Kan complex, is a bi-graded simplicial set $Y_{*\cDot}$ such that $Y_{ip} \rarr *$ is a fibration for $i,p=0,1$. In the same manner, a bi-graded continuous map $f_{*\cDot}: T_{*\cDot} \rarr U_{*\cDot}$ is said to be a bi-graded Serre fibration if it is so entry-wise. For $f_{*\cDot}, g_{*\cDot}: K_{*\cDot} \rarr X_{*\cDot}$ bi-graded simplicial maps, we say there is a bi-graded simplicial homotopy $f_{*\cDot} \xrightarrow{\simeq} g_{*\cDot}$ if we have a simplicial homotopy $f_{ip} \xrarr{\simeq} g_{ip}$ for $i,p=0,1$, which we recall means there is a commutative diagram:
\beq
\xymatrix{
K_{ip} \times \Delta^0 = K_{ip} \ar[d]_{1 \times d^1} \ar[dr]^{f_{ip}} \\
K_{ip} \times \Delta^1 \ar[r]^{h_{ip}} & X_{ip} \\
K_{ip} \times \Delta^0 \ar[u]^{1 \times d^0} \ar[ur]_{g_{ip}} } \nonumber
\eeq
in which case we say $h_{*\cDot} = (h_{ip})$ is a bi-graded simplicial homotopy $f_{*\cDot} \rarr g_{*\cDot}$. If $j_{*\cDot}: L_{*\cDot} \subset K_{*\cDot}$ denotes a bi-graded inclusion such that $f_{*\cDot}|_{L_{*\cDot}} = g_{*\cDot}|_{L_{*\cDot}}$, then we say we have a bi-graded simplicial homotopy $f_{*\cDot} \rarr g_{*\cDot}$ rel $L_{*\cDot}$ if we have a simplicial homotopy $f_{ip} \rarr g_{ip}$ rel $L_{ip}$ for $i,p=0,1$, that is such that the following diagrams commute:
\beq
\xymatrix{
K_{ip} \times \Delta^1 \ar[r]^-{h_{ip}} &X_{ip} \\
L_{ip} \times \Delta^1 \ar@{_{(}->}[u]^{j_{ip} \times 1} \ar[r]_-{pr_{L_{ip}}} &L_{ip} \ar[u]_{\alpha_{ip}}} \nonumber
\eeq
For $\Xstdot$ a fibrant bi-graded simplicial set, $v_{*\cDot} = (v_{ip}) \in (\Xstdot)_0$, we define $\pi_n(\Xstdot, v_{*\cDot}) = (\pi_n(X_{ip}, v_{ip}))$ where $\pi_n(X_{ip},v_{ip})$ is the set of homotopy classes of maps $\alpha_{ip}: \Dn \rarr X_{ip}$ rel $\partial \Dn$ for $i,p=0,1$, such that the following diagram is commutative:
\beq
\xymatrix{
\Dn \ar[r]^{\alpha_{ip}} &X_{ip} \\
\partial \Dn \ar@{^{(}->}[u] \ar[r] &\Delta^0 \ar[u]_{v_{ip}} } \nonumber
\eeq
and $\pi_0(\Xstdot) = (\pi_0(X_{ip}))$, $\pi_0(X_{ip})$ the set of path components of $X_{ip}$, $i,p=0,1$. Now a map $\fstdot: \Xstdot \rarr \Ystdot$ between fibrant bi-graded simplicial sets is said to be a weak equivalence if for all $\xstdot \in (\Xstdot)_0$, the induced map $(\fstdot)_*: \pi_k(\Xstdot, \xstdot) \rarr \pi_k(\Ystdot, \fstdot(\xstdot))$ is an isomorphism for all $k \geq 1$, that is $(f_{ip})_*: \pi_k(X_{ip}, x_{ip}) \xrightarrow{\cong} \pi_k(Y_{ip}, f_{ip}(x_{ip}))$ for $i,p=0,1$, and $(\fstdot)_*: \pi_0(\Xstdot) \rarr \pi_0(\Ystdot)$ is a bijection, that is $(f_{ip})_*: \pi_0(X_{ip}) \rarr \pi_0(Y_{ip})$ is a bijection for $i,p=0,1$, or in other terms a weak equivalence is so if it is a weak equivalence entry-wise.
\begin{fibr}
A map $\fstdot: \Xstdot \rarr \Ystdot$ between fibrant bi-graded simplicial sets is a trivial fibration if and only if $\fstdot$ has the right lifting property with respect to all maps $\partial \Delta^n \subset \Delta^n$ for $n \geq 0$, entry-wise.
\end{fibr}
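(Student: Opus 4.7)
The plan is to reduce this statement to the classical (ungraded) result of Goerss-Jardine via the component-wise definitions introduced in the preceding paragraphs. All of the relevant structure on $\Sd$ has been set up to be diagonal: fibrations, weak equivalences, and fibrancy of graded simplicial sets are all defined by requiring the corresponding property to hold for each of $f_0 : X_0 \to Y_0$ and $f_1 : X_1 \to Y_1$ separately. The boundary inclusions $\partial \Delta^n \subset \Delta^n$ are themselves ordinary simplicial maps, and the RLP in question is explicitly to be checked component-wise, so the problem splits into two copies of the classical situation.

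For the forward direction, I would assume $\fd$ is a trivial fibration, so by definition $\fd$ is both a fibration and a weak equivalence; unpacking via the component-wise definitions, this means that for each $i = 0, 1$ the map $f_i : X_i \to Y_i$ in $\sSet$ is a fibration and a weak equivalence between fibrant simplicial sets. Applying the ordinary Goerss-Jardine theorem to each $f_i$ then yields the RLP of $f_i$ with respect to every $\partial \Delta^n \hookrightarrow \Delta^n$, which is precisely the component-wise RLP claimed for $\fd$.

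For the converse, I would start with $\fd$ satisfying the RLP against all $\partial \Delta^n \subset \Delta^n$ component-wise, i.e.\ each $f_i$ has the ordinary RLP in $\sSet$. The classical theorem then gives that each $f_i$ is an ordinary trivial fibration, hence a fibration and a weak equivalence between fibrant simplicial sets. Reassembling $\fd = f_0 \oP f_1$ and invoking the component-wise definitions in the reverse direction, $\fd$ is a fibration (component-wise) and a weak equivalence (component-wise), hence a trivial fibration in $\Sd$.

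There is no genuine obstacle here: the argument is essentially formal once one observes that every piece of structure used in the statement has been defined diagonally. The only thing to be careful about is bookkeeping, namely to confirm that the fibrancy hypothesis on $\Xd$ and $\Yd$ transfers correctly (which it does, since $\Xd$ fibrant means $X_i \to *$ is a fibration for $i = 0, 1$) so that the classical theorem of Goerss-Jardine applies to each component without extra hypotheses.
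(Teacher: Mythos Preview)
Your proposal is correct and follows exactly the same approach as the paper: reduce to the component-wise situation and invoke the classical Goerss--Jardine result in each component. The paper's own proof is in fact a single sentence to this effect, so your version is simply a more detailed spelling-out of the same argument.
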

\begin{proof}
The result holds entry-wise (\cite{GoJa}), hence is true for the corresponding bi-graded objects by definition.
\end{proof}
\begin{Eta}
Suppose $\Xstdot$ is a bi-graded Kan complex. Then the canonical map $\eta_{\Xstdot}: \Xstdot \rarr S|\Xstdot|$ is a weak equivalence.
\end{Eta}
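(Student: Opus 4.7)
The plan is to reduce this to the classical statement for ordinary Kan complexes via the component-wise definitions that have been set up throughout this subsection. Everything in the graded world here -- realization, singular functor, fibrations, homotopy groups, and weak equivalences -- has been defined as a direct sum of the corresponding ordinary notions on the two components, so the proof should amount to unwrapping the definitions and invoking the classical unit-of-adjunction result exactly twice.

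First I would write $\Xd = X_0 \oP X_1$ and, using the definitions from the preceding paragraphs, identify
\[
|\Xd| = |X_0| \oP |X_1| \in \Topd, \qquad S|\Xd| = S|X_0| \oP S|X_1| \in \Sd,
\]
so that the unit $\eta_{\Xd}$ of the adjunction $|\,\cdot\,| \dashv S$ established in Proposition (the $SL$-adjunction) decomposes as $\eta_{\Xd} = \eta_{X_0} \oP \eta_{X_1}$, where each $\eta_{X_i}\colon X_i \rarr S|X_i|$ is the classical unit of the ordinary $|\,\cdot\,| \dashv S$ adjunction between $\sSet$ and $\Top$. This is a purely formal step: the graded adjunction was itself built out of the component-wise ordinary adjunctions.

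Next I would use the hypothesis that $\Xd$ is a graded Kan complex. By the definition given just before the theorem on lifting properties, this means precisely that $X_i \rarr *$ is a Kan fibration for each $i = 0,1$, i.e.\ each $X_i$ is an ordinary Kan complex. The classical result from Goerss-Jardine then tells us that $\eta_{X_i}\colon X_i \rarr S|X_i|$ is a weak equivalence of simplicial sets for $i = 0,1$; that is, $\eta_{X_i*}\colon \pi_n(X_i, x_i) \xrightarrow{\cong} \pi_n(S|X_i|, \eta_{X_i}(x_i))$ for every $n \geq 1$ and every basepoint, and a bijection on $\pi_0$.

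Finally I would assemble these two component-wise equivalences using the graded definition of weak equivalence stated just before the theorem: a map $\fd$ between fibrant graded simplicial sets is a weak equivalence iff $f_i$ is a weak equivalence for each $i=0,1$. Since $S|\Xd|$ is fibrant (being $S$ of a topological space, component-wise), and $\Xd$ is fibrant by hypothesis, this criterion applies and gives that $\eta_{\Xd}$ is a graded weak equivalence. I do not expect a real obstacle here -- the proof is a bookkeeping argument mirroring the pattern already used for Theorem (fibrancy characterization), where a result is transferred from the ordinary setting to the graded one by invoking it on each parity component.
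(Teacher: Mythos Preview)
Your proposal is correct and follows exactly the same approach as the paper: the paper's proof is a one-sentence invocation of the component-wise definitions of $S$, $|\,\cdot\,|$, and weak equivalence together with the classical ungraded result from Goerss--Jardine, and your argument is simply a more detailed unpacking of that same reduction.
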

\begin{proof}
The proof follows immediately from a similar proposition of \cite{GoJa} in the ungraded case because of the definitions of $S$ and $|\,\,|$, and the fact that $\eta_{\Xstdot}$ is a weak equivalence if it is so entry-wise, which is the case (\cite{GoJa}).
\end{proof}

Now if $\Xstdot$ is a bi-graded Kan complex, $\xstdot$ any vertex of $\Xstdot$, then by virtue of the above proposition we have:
\begin{align}
	\pi_n(\Xstdot, \xstdot) & \cong \pi_n(S|\Xstdot|, x_{* \cDot}) \nonumber \\
	&= \Hom(\Delta^2 \Dn/\pDn, S|\Xstdot| ) \nonumber \\
	&\cong \Hom( \Delta^2 |\Dn/\pDn|, |\Xstdot|) \nonumber \\
	&\cong \Hom(\Delta^2 S^n, |\Xstdot|) \nonumber \\
	&= \pi_n(|\Xstdot|, \xstdot) \,\, n \geq 1 \nonumber
\end{align}
so a map $\fstdot: \Xstdot \rarr \Ystdot$ of bi-graded Kan complexes is a weak equivalence if and only if the induced map $|\fstdot|: |\Xstdot| \rarr |\Ystdot|$ is a bi-graded topological weak equivalence, which leads us to defining, as in \cite{GoJa}, that a map $\fstdot: \Xstdot \rarr \Ystdot$ of bi-graded simplicial sets be a weak equivalence if the induced map $|\fstdot|: |\Xstdot| \rarr |\Ystdot|$ is a weak equivalence of bi-graded spaces. We define a cofibration of bi-graded simplicial sets to be an entry-wise inclusion.
\begin{SModStr}
$\Sstdot$ together with the classes of Kan fibrations, cofibrations and weak equivalences defined above is a model category.
\end{SModStr}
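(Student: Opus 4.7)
The plan is to reduce each model category axiom to the corresponding statement in the Quillen model structure on $\sSet$, exploiting the fact that every distinguished class of morphisms in $\Sd$ is defined to hold component-wise. Structurally, $\Sd$ is equivalent to the product category $\sSet \times \sSet$, so this is essentially an instance of the product-of-model-categories construction; I would nevertheless check each axiom by hand to keep the paper self-contained.

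First, I would note that $\Sd$ has all small limits and colimits, computed component-wise from those in $\sSet$, since (co)limits in a product of categories are taken factorwise. Next, for the 2-out-of-3 axiom: given composable graded maps $\fd, \gd$ with $\hd = \fd \circ \gd$, if two of the three are weak equivalences, then component-wise two of $f_i, g_i, h_i$ are weak equivalences in $\sSet$ for each $i=0,1$, so by Quillen's theorem the third is a weak equivalence component-wise, hence by definition a weak equivalence in $\Sd$. The retract axiom follows by the same device: a retract diagram in $\Sd$ projects to retract diagrams in $\sSet$ at each parity, and Kan fibrations, monomorphisms, and weak equivalences in $\sSet$ are each closed under retracts.

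Third, for the lifting axioms, a commutative square in $\Sd$
\beq
\xymatrix{
\Ad \ar[d]_{\iotad} \ar[r] & \Xd \ar[d]^{\pd} \\
\Bd \ar[r] & \Yd
} \nonumber
\eeq
with $\iotad$ a (trivial) cofibration and $\pd$ a (trivial) Kan fibration splits into two squares in $\sSet$, one for each parity; Quillen's model structure on $\sSet$ produces a lift $\theta_i: B_i \rarr X_i$ in each, and the pair $\theta_{\cDot} = \theta_0 \oP \theta_1$ is the required graded lift. Fourth, for the factorization axiom, given $\fd: \Xd \rarr \Yd$, apply Quillen's functorial factorizations to each component to obtain $f_i = p_i \circ j_i$ of the desired types, then set $\fd = (p_0 \oP p_1) \circ (j_0 \oP j_1)$; functoriality in $\Sd$ is automatic because morphisms in $\Sd$ are themselves pairs of morphisms in $\sSet$ and the assignment respects composition parity-by-parity.

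There is no genuine obstacle here: every step is a transparent bookkeeping exercise, and the only mild subtlety is ensuring that one does not confuse ``component-wise'' (parity-indexed) with ``level-wise'' (simplicial-degree-indexed), a distinction already flagged after Proposition~\ref{SLadj}. The argument makes no use of the tensor product or symmetric monoidal structure, so it goes through purely at the level of the underlying category $\sSet \oP \sSet$.
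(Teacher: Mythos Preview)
Your proposal is correct and follows essentially the same approach as the paper: both arguments verify each model category axiom by projecting to the two parity components and invoking the corresponding axiom for the Quillen model structure on $\sSet$, then reassembling. Your additional remark that $\Sd \simeq \sSet \times \sSet$ as a product of model categories is a clean conceptual framing, but the actual verification is identical to the paper's.
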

\begin{proof}
$\SetD$ is complete and cocomplete, hence so is $\Sstdot$. For the 2 out of 3 property, suppose we have $\fstdot = \gstdot \circ \hstdot$. If two of $\fstdot$, $\gstdot$ or $\hstdot$ is a weak equivalence, so are their entries, so writing $f_{ip} = g_{ip} \circ h_{ip}$ since the 2 out of 3 property holds at that level for $i,p=0,1$, then the third function would be an equivalence for $i,p=0,1$, hence so would be the resulting bi-graded function. For the retract property, say $\fstdot$ is a retract of $\gstdot$, and $\gstdot$ is a weak equivalence, fibration or cofibration. The retract breaks up into diagrams for $i,p=0,1$ as:
\beq
\xymatrix{
X \ar[d]^{f_{ip}} \ar[r] &Y \ar[d]^{g_{ip}} \ar[r] &X \ar[d]^{f_{ip}} \\
U \ar[r] &X \ar[r] &U} \nonumber
\eeq
$\gstdot$ having one of the three properties mentioned above, it is so entry-wise, at which level $f_{ip}$ being a retract of $g_{ip}$ the former map shares the same property, for $i,p=0,1$, hence $\fstdot$ shares that same property $\gstdot$ had. For the lifting property suppose we have:
\beq
\xymatrix{
U_{*\cDot} \ar[d]_{i_{*\cDot}} \ar[r] &\Xstdot \ar[d]^{p_{*\cDot}}\\
V_{*\cDot} \ar@{.>}[ur] \ar[r] &\Ystdot } \nonumber
\eeq
	with $i_{*\cDot}$ a cofibration, $p_{*\cDot}$ a fibration. We show there is a dotted arrow as shown making the diagram commutative if either $i_{* \cDot}$ or $p_{* \cDot}$ is a weak equivalence. That diagram breaks up into diagrams for $j,q=0,1$:
\beq
\xymatrix{
	U_{jq} \ar[d]_{i_{jq}} \ar[r] &X_{jq} \ar[d]^{p_{jq}}\\
	V_{jq} \ar@{.>}[ur] \ar[r] &Y_{jq} } \nonumber
\eeq
	Suppose for argument's sake, $p_{*\cDot}$ is a weak equivalence. Then it is so entry-wise, so in the diagrams above for $j,q=0,1$ there is a dotted arrow, hence the original bi-graded square has a diagonal arrow as claimed. Finally for the functorial factorization property, let $\fstdot: \Xstdot \rarr \Ystdot$ be a map in $\Sstdot$. We will show half of the property, namely that $\fstdot$ can factor as $p_{*\cDot} \circ i_{*\cDot}$ with $p_{*\cDot}$ a trivial fibration, $i_{*\cDot}$ a cofibration, the other factorization as a trivial cofibration followed by a fibration being proved in like manner. $\fstdot = (f_{ip})$, and entry-wise the factorization property holds, so we can write: $f_{jq} = p_{jq} \circ i_{jq}$, with $p_{jq}$ a trivial fibration for $j,q=0,1$, $i_{jq}$ a cofibration for $j,q=0,1$. Writing $i = (i_{jq})$ and $p = (p_{jq})$ we have $\fstdot = p_{*\cDot} \circ i_{*\cDot}$ with $p_{*\cDot}$ a trivial fibration, and $i_{*\cDot}$ a cofibration. This completes the proof.
\end{proof}

\subsubsection{Model category structure on $\sksMod_*$}
We adapt Thm II.4.1 of \cite{GoJa} to our setting. Let $\cCstdot$ be a $\mathbb{Z}_2$-bi-graded category, $s\cCstdot$ the category of simplicial objects in $\cCstdot$, $s\cCstdot= \cCstdot^{\Dop}$. We assume there is a functor $\Gstdot: s\cCstdot \rarr \Sstdot$ with a left adjoint $F_{*\cDot}: \Sstdot \rarr s\cCstdot$. We define a morphism $\fstdot: M_{*\cDot} \rarr N_{*\cDot}$ in $\sksMod_*$ to be a weak equivalence (resp. a fibration) if $\Gstdot\fstdot$ is a weak equivalence (resp. a fibration) of bi-graded simplicial sets, and $\fstdot$ is a cofibration if it has the left lifting property with respect to all trivial fibrations in $s\cCstdot$. Entry-wise, that gives us $G_{ip}: s\cC \rarr \SetD$ with a left adjoint $F_{ip}: \SetD \rarr s\cC$. Given the model category structure we put on $\Sstdot$, fibrations, cofibrations and weak equivalences in $\scCstdot$ are defined entry-wise in $s\cC$. We will apply this formalism to the case $\cC = \kMod$, so that $s\cC = \skMod$ and $s\cCstdot = \sksMod_*$.\\

Note also that we have a natural map:
\beq
\colim_I \Gstdot(X_{\alpha}) \rarr \Gstdot( \colim_I X_{\alpha}) \nonumber
\eeq
which decomposes into entry maps:
\beq
\colim_I G_{ip}(X_{\alpha, ip}) \rarr G_{ip}( \colim_I X_{\alpha, ip}) \nonumber
\eeq
for $i,p=0,1$.

\newpage

\begin{sCdMod}
Suppose $\cC$ is bicomplete and $\Gstdot: \scCstdot \rarr \Sstdot$ commutes with filtered colimits. Then with the classes of fibrations, cofibrations and weak equivalences defined above, along with the assumption that a cofibration with the left lifting property with respect to all fibrations be a weak equivalence, $s\cCstdot$ is a model category.
\end{sCdMod}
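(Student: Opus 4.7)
The strategy is to reduce each model category axiom for $\sCd$ to its componentwise analogue in $s\cC$, where Theorem II.4.1 of \cite{GoJa} is available. An object of $\sCd$ is a pair of objects of $s\cC$ indexed by $i=0,1$, and fibrations, cofibrations and weak equivalences in $\sCd$ were defined to hold component-wise; since (co)limits are formed component-wise from those in $s\cC = \cC^{\Dop}$, bicompleteness of $\sCd$ follows at once from bicompleteness of $\cC$.

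For the two-out-of-three and retract axioms, any composable triple $\fd = \gd \circ \hd$ or retract diagram in $\sCd$ splits into two ordinary diagrams indexed by $i=0,1$; the graded property then follows from the ungraded one since each of the three distinguished classes was defined component-wise. The same principle handles lifting: a square
\[
\xymatrix{
\Ud \ar[d]_{i_{\cDot}} \ar[r] & \Xd \ar[d]^{p_{\cDot}} \\
\Vd \ar[r] \ar@{.>}[ur] & \Yd
}
\]
with $i_{\cDot}$ a cofibration, $p_{\cDot}$ a fibration, and one of them trivial, decomposes into the pair of lifting problems for $i_j, p_j$, $j=0,1$; one solves each in $s\cC$ via Theorem II.4.1 of \cite{GoJa} and then assembles the solutions as $\ell_{\cDot} = \ell_0 \oP \ell_1$.

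The main work is the functorial factorization axiom, where both hypotheses of the theorem play their role. The assumption that $\Gd$ commutes with filtered colimits descends to the components $G_i \colon s\cC \rarr \sSet$ via the decomposition of the natural map $\colim_I \Gd(X_{\alpha}) \rarr \Gd(\colim_I X_{\alpha})$ already noted in the excerpt, and the additional assumption (that a cofibration with the LLP against all fibrations be a weak equivalence) holds component-wise by definition. Thus the ungraded Theorem II.4.1 of \cite{GoJa} applies to each $s\cC$, producing, for every $f_i \colon M_i \rarr N_i$, a functorial factorization as cofibration followed by trivial fibration, and another as trivial cofibration followed by fibration. Taking direct sums of the intermediate objects and maps yields the desired functorial factorizations of $\fd$ in $\sCd$. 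The real obstacle is confined entirely to the small object argument inside the ungraded theorem, which is exactly what the filtered-colimit hypothesis on $\Gd$ is designed to accommodate; the grading itself contributes no new difficulty.
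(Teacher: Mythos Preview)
Your proposal is correct and follows essentially the same route as the paper: both arguments first observe that the hypotheses on $\Gd$ descend component-wise so that Theorem~II.4.1 of \cite{GoJa} yields a model structure on each copy of $s\cC$, and then verify each model-category axiom for $\sCd$ by splitting diagrams into their $i=0,1$ components. Your write-up is slightly more explicit about where the filtered-colimit hypothesis enters (the small object argument) and about functoriality of the factorizations, but the underlying strategy is identical.
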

\begin{proof}
Since $\Gstdot$ commutes with filtered colimits, it does so entry-wise, so the hypotheses of Thm II.4.1 of \cite{GoJa} are met, hence $s\cC$ is a model category, something we will use to prove $\scCstdot$ itself is a model category. $\scCstdot$ is clearly bicomplete. For the 2 out of 3 property, suppose we have a factorization in $\scCstdot$:
\beq
\xymatrix{
\Xstdot \ar[dr]_{\fstdot} \ar[rr]^{\gstdot} &&\Ystdot \\
	&Z_{*\cDot}  \ar[ur]_{\hstdot}} \nonumber
\eeq
This breaks up into diagrams for $i,p=0,1$:
\beq
\xymatrix{
X_{ip} \ar[rd]_{f_{ip}} \ar[rr]^{g_{ip}} &&Y_{ip} \\
&Z_{ip} \ar[ur]_{h_{ip}}  } \nonumber
\eeq
	in $s\cC$, where the 2 out of 3 property holds since it is a model category. Suppose for illustrative purposes $\gstdot$ and $\fstdot$ are weak equivalences, then $g_{ip}$ and $f_{ip}$ are so for $i,p=0,1$, hence $h_{ip}$ is a weak equivalence for $i,p=0,1$, thus so is $\hstdot = (h_{ip})$. For the retract property, $\fstdot$ a retract of $\gstdot$ is a property defined by a diagram in $\scCstdot$, which breaks up into retract diagrams in $s\cC$, where the retract property holds, so starting from $\gstdot$ with a property $P$, it being a weak equivalence, a fibration, or a cofibration, it is so entry-wise, so by the retract property in $s\cC$ $f_{ip}$ shares the same property for $i,p=0,1$, and those entries recombine into $\fstdot$ with that same property $P$ $\gstdot$ had. For the lifting property and the functorial factorization property, the argument is the same, we work entry-wise and use the fact that $s\cC$ is a model category from \cite{GoJa}.
\end{proof}
\begin{sksModModcat}
With the above notions of fibrations, cofibrations and weak equivalences, $\sksMod_*$ is a model category.
\end{sksModModcat}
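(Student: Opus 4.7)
The plan is to invoke Theorem \verb|sCdMod| with $\cC = \kMod$, so that $s\cC = \skMod$ and $s\cCd = \sksMod$. To do so I need to verify the three hypotheses: bicompleteness of $\cC$, existence of an adjoint pair $F \dashv \Gd$ with $\Gd: \sksMod \rarr \Sd$, the commutation of $\Gd$ with filtered colimits, and the auxiliary assumption that a cofibration having the left lifting property against every fibration is a weak equivalence.

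First I would note that $\kMod$ is a bicomplete category (limits and colimits of $k$-modules being computed at the level of underlying sets and then equipped with the induced $k$-module structure), and both $\ksMod$ and $\sksMod$ inherit bicompleteness componentwise and then levelwise, since $\ksMod = \kMod_0 \oP \kMod_1$ and simplicial objects in a bicomplete category are bicomplete. Next I would define $\Gd = G_0 \oP G_1: \sksMod \rarr \Sd$ to be the componentwise forgetful functor, with $G_i: \skMod \rarr \sSet$ the usual forgetful functor in each parity; its left adjoint $F = F_0 \oP F_1$ is obtained componentwise from the free $k$-module functor $F_i: \sSet \rarr \skMod$ applied levelwise. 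The adjunction $F \dashv \Gd$ then follows from the componentwise adjunction $F_i \dashv G_i$ established in \cite{GoJa}, together with the fact that morphisms in $\sksMod$ and $\Sd$ decompose as direct sums over the two parities.

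Third, I would check that $\Gd$ commutes with filtered colimits: componentwise, the forgetful functor $G_i: \skMod \rarr \sSet$ commutes with filtered colimits (a standard fact, cf.\ \cite{GoJa}), and filtered colimits in $\sksMod$ and $\Sd$ are computed componentwise by the bicompleteness established above, so the natural map $\colim_I \Gd(X_{\alpha}) \rarr \Gd(\colim_I X_{\alpha})$ is an isomorphism because each of its two parity components is.

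The main obstacle, and the only nontrivial verification, is the auxiliary axiom that any cofibration $i_{\cDot}: \Ud \rarr V_{\cDot}$ in $\sksMod$ with the left lifting property against every fibration is a weak equivalence. The strategy here is to reduce componentwise: I would argue that if $i_{\cDot} = i_0 \oP i_1$ has the LLP against all fibrations $p_{\cDot} = p_0 \oP p_1$, then by choosing fibrations concentrated in a single parity (taking the other component to be the identity of the zero object) one deduces that each $i_j$ has the LLP against every fibration $p_j$ in $\skMod$. The corresponding assumption for the ungraded simplicial model category $\skMod$, which is verified in \cite{GoJa} via the small object argument and the path object construction, then forces each $i_j$ to be a weak equivalence in $\skMod$, and hence $i_{\cDot}$ is a weak equivalence by the componentwise definition. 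With this verification in place, Theorem \verb|sCdMod| applies and yields the model structure on $\sksMod$.
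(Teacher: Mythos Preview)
Your proposal is correct and follows essentially the same approach as the paper: both apply Theorem \texttt{sCdMod} with $\cC = \kMod$, building $\Gd$ as the componentwise/graded prolongation of the forgetful functor $\skMod \rarr \sSet$, with left adjoint the free module functor, and checking commutation with filtered colimits componentwise. Your treatment is in fact more careful on one point the paper glosses over: you explicitly verify the auxiliary hypothesis (that a cofibration with the LLP against all fibrations is a weak equivalence) by reducing componentwise via fibrations concentrated in a single parity, whereas the paper's proof simply says ``at this point we just use the previous theorem'' without addressing that hypothesis.
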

\begin{proof}
	Let $\cC = \kMod$, so that $s\cC = \skMod$ and $\scCstdot = \sksMod_*$. We know $\cC$ is bicomplete. Start from the forgetful functor $G: \kMod \rarr \Set$, with a left adjoint $F$, which we both prolong to the simplicial case to get maps which we will again denote by $G$ and $F$: $G: \skMod \rarr \SetD$ and its left adjoint $F$, by defining $G(X)_n = G(X_n)$. $G$ preserving filtered colimits, so will its bi-graded generalization $\Gstdot: \sksMod_* \rarr \Sstdot$, which has a bi-graded generalization of $F$ for left adjoint, denoted $F_{*\cDot}$. At this point we just use the previous theorem.
\end{proof}

\subsubsection{bi-graded simplicial categories}
About notations, if $\cC$ is a bi-graded category, write $\underline{\cC}$ for its ungraded counterpart. For instance if $\cC = \sksMod_*$, then $\underline{\cC} = \skMod$, which generically refers to either $\skMod_0$ or $\skMod_1$. Also, for ease of reading, we will just write $M$ for $M_{*\cDot} \in \ksMod_*$. We also define the enhanced $\Hom$ set $\Hom^+$ as follows: $(\Hom^+(M,N))_0 = \Hom(M,N)$, while $(\Hom^+(M,N))_1$ consist of the set of parity reversing morphisms from $M$ to $N$.
\begin{grscat}
A bi-graded category $\cC$ is a bi-graded simplicial category, following the ungraded definition in \cite{GoJa}, if there is a mapping space functor:
\beq
\uHomC(-,-):\cC^{\op} \times \cC \rarr \Sstdot \nonumber
\eeq
such that $\forall M,N \in \Ob(\cC)$:
\begin{enumerate}
	\item $\uHomC(M,N)_{0\times 0} = \Hom^+_{\cC}(M,N)$
  	\item $\uHomC(M,-): \cC \rarr \Sstdot$ has a left-adjoint $M \boxtimes - :\Sstdot \rarr \cC$. Adjointness means:
\beq
		\Hom_{\cC}(M \boxtimes K , N) \cong \HomSstdot(K, \uHomC(M,N))  \nonumber
\eeq
with associativity $M \boxtimes (K \times L) \cong (M \boxtimes K) \boxtimes L$.
  	\item $- \boxtimes K: \cC \rarr \cC$ has a right adjoint $\exp_{-}(K) = \uhomC(K,-): \cC \rarr \cC$ i.e.
\beq
		\Hom_{\cC}(M \boxtimes K, N) \cong \Hom_{\cC}(M, N^K)  \nonumber
\eeq
\end{enumerate}
\end{grscat}

\begin{sksModgrscat}
$s\ucC = \skMod$ being a simplicial category (\cite{TV4}), it follows that $s\cC = \sksMod_*$ becomes a bi-graded simplicial category with:
	\begin{align}
		\uHomsC(M,N)_{n \times n} &= \HomsC( M \boxtimes \Delta^2 \Delta^n, N) \nonumber \\
		&= (\HomsuC(M_{ip}, N_{ip})_n) \nonumber
	\end{align}
\end{sksModgrscat}
\begin{proof}
Here, as in \cite{TV}:
\beq
- \otimes -: \skMod \times \SetD \rarr \skMod \nonumber
\eeq
is defined by:
\beq
(M \otimes K)_n = \coprod_{k \in K_n} M_n \nonumber
\eeq
For $\psi: [m] \rarr [n]$ in $\Delta$, we have an induced map $\psi^*: (M \otimes K)_m \rarr (M \otimes K)_n $ that comes from:
\beq
\coprod_{k \in K_m} M_m \rarr \coprod_{k \in K_m} M_n \rarr \coprod_{k \in K_n} M_n \nonumber
\eeq
We determine $\HomsC(M \boxtimes K, N)$. In a first time, $\skMod$ being a simplicial category, if $M,N \in \skMod$, $K \in \SetD$, we have $\Hom(M \oT K, N) \cong \Hom(K, \uHom_{\skMod}(M,N))$. If now $M,N \in \sksMod$, $K \in \Sd = (\SetD)_0 \oP (\SetD)_1$:
\begin{align}
	\Hom(M  \oT K, N) &= \Hom((M \oT K)_0 \oP (M \oT K)_1, N_0 \oP N_1) \nonumber \\
		&=\Hom((M_0 \oT  K_0)  \oP (M_1 \oT K_1),N_0) \nonumber \\
		&\qquad \oP \Hom ((M_0 \oT K_1) \oP (M_1 \oT K_0), N_1) \nonumber \\
		&=\Hom(M_0 \oT K_0,N_0) \oP \Hom( M_1 \oT K_1,N_0) \nonumber \\
		&\qquad \oP \Hom(M_0 \oT K_1,N_1) \oP \Hom(M_1 \oT K_0,N_1) \nonumber \\
		&\cong \Hom_{\SetD}(K_0, \uHom_{\skMod}(M_0,N_0)) \oP \Hom(K_1,\uHom(M_1,N_0)) \nonumber \\
		&\qquad \oP \Hom (K_1,\uHom(M_0,N_1)) \oP \Hom(K_0, \uHom(M_1,N_1)) \nonumber \\
		&=\Hom(K_0, \uHom(M_0,N_0) \oP \uHom(M_1,N_1)) \nonumber \\
		&\qquad \oP \Hom(K_1, \uHom(M_1,N_0) \oP \uHom(M_0,N_1)) \nonumber \\
		&= \Hom_{S_{\cDot}}(K, \uHom_{\sksMod}(M,N)) \nonumber
\end{align}
if we define:
	\begin{align}
		(\uHom_{\sksMod}(M,N))_0 &= \uHom(M_0,N_0) \oP \uHom(M_1,N_1) \nonumber \\
		(\uHom_{\sksMod}(M,N))_1 &= \uHom(M_0,N_1) \oP \uHom(M_1,N_0) \nonumber
	\end{align}
Now for $M,N \in \sksMod_* = s\cC$, $K \in \Sstdot$, we have:
\begin{align}
	\HomsC(M \bT K, N) &= \oP_i \Hom_{\sksMod}(M_i \oT K_i, N_i) \nonumber \\
	&\cong \oP_i \Hom_{\Sd}(K_i, \uHom_{\sksMod}(M_i,N_i)) \nonumber \\
	&= \Hom_{\Sstdot}(K, \uHom_{\sksMod_*}(M,N)) \label{MbTKNHom}
\end{align}
if we define:
\beq
	\uHomsC(M,N) = \oP_i \uHom_{\sksMod}(M_i,N_i) \nonumber
\eeq
Thus \eqref{MbTKNHom} shows $M \bT - \dashv \uHom_{s\cC}(M,-)$. We also have:
\begin{align}
	\uHomsC(M,N)_{ 0 \times 0} & = \Hom_{\Sstdot}(\Delta^2 \Delta^0,\uHomsC(M,N)) \nonumber \\
	&=\oP_i \Hom_{\Sd}(\Delta \Delta^0, \uHom_{\sksMod}(M_i,N_i)) \nonumber \\
	&=\oP_i \{\Hom_{\SetD}(\Delta^0, \oP_p \uHom_{\skMod}(M_{ip},N_{ip})) \nonumber \\
	&\qquad \oP \Hom(\Delta^0, \oP_{p \neq q} \uHom_{\skMod}(M_{ip},N_{iq}))\} \nonumber \\
	&=\oP_i \{ \oP_p \Hom_{\SetD}(\Delta^0,\uHom_{\skMod}(M_{ip},N_{ip})) \nonumber \\
	& \qquad\oP \oP_{p \neq q} \Hom(\Delta^0, \uHom_{\skMod}(M_{ip}, N_{iq}))\} \nonumber \\
	&= \oP_i \{ \oP_p( \uHom_{\skMod}(M_{ip}, N_{ip}))_0 \oP \oP_{p \neq q} (\uHom(M_{ip}, N_{iq}))_0 \} \nonumber \\
	&= \oP_i \{ \oP_p \HomsuC(M_{ip},N_{ip}) \oP \oP_{p \neq q} \HomsuC(M_{ip}, N_{iq}) \} \nonumber \\
	&=\oP_i \{ \Hom^+_{\sksMod}(M_i,N_i) \} \nonumber \\
	&= \HomsC^+(M,N) \nonumber 
\end{align}
From \eqref{MbTKNHom} again, we have:
\begin{align}
	\uHomsC(M,N)_{ n \times n} &= \Hom_{\Sstdot}(\Delta^2 \Dn, \uHom_{\sksMod_*}(M,N)) \nonumber \\ 
&\cong \HomsC(M \boxtimes \Delta^2 \Delta_n, N) \nonumber
\end{align}
as claimed. For the associativity, we use the associativity of $\otimes$ in the ungraded case:
\begin{align}
	(M \boxtimes K) \boxtimes L &= \oP_i (M_i \oT K_i) \oT L_i \nonumber \\
	&=\oP_i \oP_{p,q,r} (M_{ip} \oT K_{iq}) \oT L_{ir} \nonumber \\
	&\cong \oP_i \oP_{p,q,r} M_{ip} \oT (K_{iq} \times L_{ir}) \nonumber \\
	&=\oP_i(\oP_p M_{ip}) \times (\oP_{q,r} K_{iq} \times L_{ir}) \nonumber \\
	&=\oP_i M_i \oT (K_i \times L_i) \nonumber \\
	&=\oP_i M_i \oT (K \times L)_i \nonumber \\
	&= M \bT (K \times L) \nonumber
\end{align}
For the exponent map:
\begin{align}
	\HomsC(M \boxtimes K, N) &= \oP_i \Hom_{\sksMod}(M_i \oT K_i, N_i) \nonumber \\
	&=\oP_i \{ \Hom_{\skMod}(\oP_p M_{ip} \oT K_{ip}, N_{i0}) \nonumber \\
	&\qquad \oP \Hom(\oP_{p \neq q} M_{ip} \oT K_{iq},N_{i1}) \} \nonumber \\
	&= \oP_i \{ \oP_p \HomsuC(M_{ip} \oT K_{ip},N_{i0}) \nonumber \\
	&\qquad \oP \oP_{p \neq q} \HomsuC(M_{ip} \oT K_{iq},N_{i1}) \} \nonumber \\
	&\cong \oP_i \{ \oP_p \HomsuC(M_{ip},N_{i0}^{K_{ip}}) \oP \oP_{p \neq q} \HomsuC(M_{ip}, N_{i1}^{K_{iq}}) \} \nonumber \\
	&= \oP_i \{ \HomsuC(M_{i0}, N_{i0}^{K_{i0}}) \oP \HomsuC(M_{i1}, N_{i0}^{K_{i1}}) \nonumber \\
	&\qquad \oP \HomsuC(M_{i0}, N_{i1}^{K_{i1}}) \oP \HomsuC(M_{i1}, N_{i1}^{K_{i0}}) \} \nonumber \\
	&= \oP_i \{ \HomsuC(M_{i0}, (N_i^{K_i})_0) \oP \HomsuC(M_{i1}, (N_i^{K_i})_1) \} \nonumber 
\end{align}
if we define:
\begin{align}
	(N_i^{K_i})_0 &= N_{i0}^{K_{i0}} \oP N_{i1}^{K_{i1}} \nonumber \\
	(N_i^{K_i})_1 &= N_{i0}^{K_{i1}} \oP N_{i1}^{K_{i0}} \nonumber 
\end{align}

From there
\begin{align}
	\HomsC(M \bT K, N) &= \oP_i \Hom_{\sksMod}(M_i, N_i^{K_i}) \nonumber \\
	&= \HomsC(M,N^K) \nonumber
\end{align}
if we define $N^K = \oP_i N_i^{K_i}$. This shows $- \bT K \dashv \exp_{-}(K)$. This completes the proof.
\end{proof}

\subsubsection{Bi-graded simplicial model categories}
We axiomatize the definition of bi-graded simplicial model category as done in \cite{GoJa} in the ungraded case. We first need to define pullbacks in $\Sstdot$. It being a bi-graded category, pullbacks are defined entrywise. In what follows $\cC = \sksMod_*$. We also use the abbreviation $\uHomC(X,Y)_{ip} = XY_{ip}$ for $i,p=0,1$. Consider the cartesian square:
\beq
\xymatrix{
	\Gamma = \uHomC(A,X) \times_{\uHomC(A,Y)} \uHomC(B,Y) \ar[d] \ar[r] &\uHomC(B,Y) = (BY_{ip}) \ar[d]\\
 \uHomC(A,X)=(AX_{ip}) \ar[r] & \uHomC(A,Y)=(AY_{ip}) 
 } \nonumber
\eeq
it breaks up into individual cartesian squares:
\beq
\xymatrix{
	\Gamma_{ip} \ar[d] \ar[r] &BY_{ip} \ar[d] \\
AX_{ip} \ar[r] &AY_{ip} } \nonumber
\eeq
for $i,p=0,1$, giving $\Gamma = \oP_{i,p} AX_{ip} \times_{AY_{ip}} BY_{ip} = ( AX_{ip} \times_{AY_{ip}} BY_{ip})$. 

\begin{grSimpModCat}
Let $\cC$ be a bi-graded model category and a bi-graded simplicial category. Suppose $j:A \rarr B$ is a cofibration, $q:X \rarr Y$ a fibration. Then:
\beq
\uHomC(B,X) \xrightarrow{(j^*,q_*)} \uHomC(A,X)\times_{\uHomC(A,Y)} \uHomC(B,Y) \nonumber
\eeq
is a fibration in $\Sstdot$, which is trivial if either of $j$ or $q$ is.
\end{grSimpModCat}
\begin{grsModCat}
A category satisfying the axiom grsModCat above will be called a bi-graded simplicial model category
\end{grsModCat}
This definition follows exactly the definition of such categories in the ungraded case as laid out in \cite{GoJa}. We prove a preliminary result found in the same reference, that will be instrumental in proving that $\sksMod_*$ is a bi-graded simplicial model category.
\begin{PropgrsModCat}
Let $\cC$ be a bi-graded model category and a bi-graded simplicial category, $i:K \rarr L$ a cofibration in $\Sstdot$, $q:X \rarr Y$ a fibration in $\cC$. Then the grsModCat axiom is equivalent to:
\beq
\uhomC(L,X) \rarr \uhomC(K,X) \times_{\uhomC(K,Y)}\uhomC(L,Y) \nonumber
\eeq
being a fibration, trivial if either of $i$ or $q$ is. Here we have denoted $N^K = \uhomC(K,N)$ for ease of reading.
\begin{proof}
It suffices to work entry-wise. We have:
\beq
	\uHomC(B,X)_{ip} \rarr \uHomC(A,X)_{ip} \times_{\uHomC(A,Y)_{ip}} \uHomC(B,Y)_{ip} \nonumber\\
\eeq
a fibration. For $p = 0$, we have a fibration:
\beq
\xymatrix{
		\uHom(B_{i0},X_{i0}) \oP \uHom(B_{i1},X_{i1}) \ar[d] \\
		\oP_p \uHom(A_{ip},X_{ip}) \times_{\oP_p \uHom(A_{ip},Y_{ip})} \oP_p \uHom(B_{ip},Y_{ip}) \ar@{=}[d] \\
		\oP_p \uHom(A_{ip},X_{ip}) \times_{\uHom(A_{ip},Y_{ip})} \uHom(B_{ip},Y_{ip}) 
		} \nonumber
\eeq
which decomposes into:
\beq
	\uHom(B_{ip},X_{ip}) \xrarr{(j_{ip}^*,(q_{ip})_*)} \uHom(A_{ip},X_{ip}) \times_{\uHom(A_{ip},Y_{ip})} \uHom(B_{ip},Y_{ip}) \nonumber
\eeq
	for $p = 0,1$, fibration in $\SetD$, trivial if either of $j$ or $q$ is, in particular if $j_{ip}$ or $q_{ip}$ is. Using the ungraded counterpart of the proposition from \cite{GoJa}, this is equivalent to:
	\beq
	\uhomuC(L_{ip},X_{ip}) \rarr \uhomuC(K_{ip}, X_{ip}) \times_{\uhomuC(K_{ip},Y_{ip})} \uhomuC(L_{ip},Y_{ip}) \label{SUSYhomGoJa1}
	\eeq
being a fibration, trivial if either $i_{ip}: K_{ip} \rarr L_{ip}$ or $q_{ip}: X_{ip} \rarr Y_{ip}$ is, for $p = 0,1$. \\

For $p = 1$, we have:
\beq
\xymatrix{
		\uHom(B_{i0},X_{i1}) \oP \uHom(B_{i1},X_{i0}) \ar[d] \\
		\oP_{p \neq q} \uHom(A_{ip},X_{iq}) \times_{ \oP_{p \neq q} \uHom(A_{ip}, Y_{iq})} \uHom(B_{ip},Y_{iq}) \ar[d] \\
	\oP_{p \neq q} \uHom(A_{ip},X_{iq}) \times_{\uHom(A_{ip},Y_{iq})} \uHom(B_{ip},Y_{iq}) 
		}\nonumber
\eeq
which decomposes as:
\beq
	\uHom(B_{i0},X_{i1}) \xrarr{(j_{i0}^*,(q_{i1})_*)} \uHom(A_{i0},X_{i1}) \times_{\uHom(A_{i0},Y_{i1})} \uHom(B_{i0},Y_{i1}) \nonumber
\eeq
and:
\beq
	\uHom(B_{i1},X_{i0}) \xrarr{(j_{i1}^*,(q_{i0})_*)} \uHom(A_{i1},X_{i0}) \times_{\uHom(A_{i1},Y_{i0})} \uHom(B_{i1},Y_{i0}) \nonumber
\eeq
	both fibrations, and trivial if $j_{ik}$, or $q_{il}$ trivial with $k \neq l$. Again, by the ungraded counterpart of this result from \cite{GoJa}, this is equivalent to:
	\beq
	\uhomuC(L_{ik},X_{il}) \rarr \uhomuC(K_{ik},X_{il}) \times_{\uhomuC(K_{ik},Y_{il})} \uhomuC(L_{ik},Y_{il}) \label{SUSYhomGoJa2}
	\eeq
	fibration, trivial if $i_{ik}: K_{ik} \rarr L_{ik}$ or $q_{il}: X_{il} \rarr Y_{il}$ is, for $k \neq l$. Recombining \eqref{SUSYhomGoJa1} and \eqref{SUSYhomGoJa2} yields:
	\beq
	\uhomC(L,X) \rarr \uhomC(K,X) \times_{\uhomC(K,Y)} \uhomC(L,Y) \nonumber
	\eeq
	fibration, trivial if $i:K \rarr L$ or $q:X \rarr Y$ is.
\end{proof}
\end{PropgrsModCat}
\begin{sksModgrsModCat}
$\sksMod_*$ is a bi-graded simplicial model category.
\end{sksModgrsModCat}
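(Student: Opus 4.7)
The plan is to reduce the grsModCat axiom for $\cC = \sksMod$ to the ungraded simplicial model category axiom on $\ucC = \skMod$, exploiting the strictly diagonal nature of every structure built up so far. By Corollary sksModModcat, $\sksMod$ is a graded model category with fibrations, cofibrations and weak equivalences all defined componentwise; by Proposition sksModgrscat it is a graded simplicial category with mapping space $\uHomsC(M,N) = \oPi \uHomsuC(M_i,N_i)$ and tensor $M \bT K = M_0 \oT K_0 \oP M_1 \oT K_1$.

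First I would invoke the argument in the proof of Proposition PropgrsModCat, which identifies the grsModCat axiom with its pushout-product form: for every cofibration $i: K \rarr L$ in $\Sd$ and every cofibration $j: A \rarr B$ in $\sksMod$, the induced map
\beq
(j \bT 1) \cup (1 \bT i):\;(A \bT L) \coprod_{A \bT K}(B \bT K) \rarr B \bT L \nonumber
\eeq
is a cofibration in $\sksMod$, trivial whenever $i$ or $j$ is. This is the condition I would verify.

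Since $A \bT K = A_0 \oT K_0 \oP A_1 \oT K_1$ and colimits in $\sksMod$ decompose componentwise, the pushout-product splits as the graded direct sum of the ungraded pushout-product maps
\beq
(A_i \oT L_i) \coprod_{A_i \oT K_i}(B_i \oT K_i) \rarr B_i \oT L_i \nonumber
\eeq
for $i=0,1$ in $\skMod$. Each $K_i \rarr L_i$ is a cofibration in $\sSet$ and each $A_i \rarr B_i$ is a cofibration in $\skMod$, both by the componentwise definitions. Invoking that $\skMod$ is a simplicial model category (\cite{TV1}, \cite{TV2}), each component pushout-product is a cofibration in $\skMod$, trivial whenever the corresponding $i_i$ or $j_i$ is. Reassembling and using the componentwise definitions of cofibrations and trivial cofibrations in $\sksMod$, the pushout-product map above has the required property, and by Proposition PropgrsModCat the grsModCat axiom is established.

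The only real obstacle, a bookkeeping one rather than a conceptual one, will be verifying that the pushout $(A \bT L) \coprod_{A \bT K} (B \bT K)$ formed in $\sksMod$ genuinely splits as $\oPi\bigl[(A_i \oT L_i) \coprod_{A_i \oT K_i} (B_i \oT K_i)\bigr]$; this reduces to the general fact that colimits in a product of categories are computed factorwise, together with the diagonal definition of $\bT$ as $M \bT K = \oPi M_i \oT K_i$. Once this identification is in place, grsModCat on $\sksMod$ reduces cleanly to two independent instances of the already-available ungraded simplicial model category axiom on $\skMod$.
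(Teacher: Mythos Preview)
Your argument is correct, but it follows a genuinely different route from the paper's own proof. The paper verifies the \emph{cotensor} reformulation of grsModCat from Proposition~PropgrsModCat rather than the pushout-product form: it pushes the map $X^L \rarr X^K \times_{Y^K} Y^L$ through the forgetful right adjoint $G: \sksMod \rarr \Sd$, proves an auxiliary lemma (Lemma~Ghom) to the effect that $G\uhomC(K,X) \cong \uhomSd(K, GX)$ via the identity $F(L \bT K) \cong F(L) \bT K$, and then appeals to $\Sd$ satisfying grsModCat. Your approach instead stays inside $\sksMod$, checks the pushout-product condition directly, and reduces componentwise to the known simplicial model structure on $\skMod$. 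Your route is shorter and avoids the $F \dashv G$ machinery altogether; the paper's route, by contrast, exhibits a reusable transfer principle along a right adjoint compatible with cotensors, which is the standard way one lifts SM7 in the Goerss--Jardine framework. Both arguments ultimately rest on the ungraded fact that $\skMod$ (equivalently $\sSet$) satisfies SM7, just accessed through different equivalent forms of the axiom.
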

\begin{proof}
We use the functor $G: \cC = \sksMod_* \rarr \Sstdot$ used to put a model structure on $\sksMod_*$. We wish to show $\sksMod_*$ satisfies the grsModCat axiom, which we just showed is equivalent to saying in particular the map
\beq
	\uhomC(L,X) \rarr \uhomC(K,X) \times_{\uhomC(K,Y)} \uhomC(L,Y) \nonumber
\eeq
	is a fibration for $i:K \rarr L$ a cofibration in $\Sstdot$, $q:X \rarr Y$ a fibration in $\cC$ (trivial if either of $i$ or $q$ is), which means $G(X^L) \rarr G(X^K \times_{Y^K} Y^L)$ is a fibration in $\Sstdot$ (trivial if either of $i$ or $q$ is). $G$ being a right adjoint it commutes with finite limits so this is equivalent to showing that:
\beq
G(X^L) \rarr G(X^K) \times_{G(Y^K)} G(Y^L) \nonumber
\eeq
is a fibration, trivial if either of $i$ or $q$ is. Now $F \dashv G: \Sstdot \rarr \cC$ satisfies, for $L,K \in \Sstdot$:
\begin{align}
	F(L \boxtimes K) &= F(\oP_i L_i \oT K_i)\nonumber \\
	&= \oP_i F_i(L_i \oT K_i)\nonumber \\
	&=\oP_i F_i(\oP_{p,q} L_{ip} \oT K_{iq}) \nonumber \\
	&=\oP_i [F_{i0}((L_{i0} \oT K_{i0}) \oP (L_{i1} \oT K_{i1})) \nonumber \\
	&\qquad \oP F_{i1}((L_{i0} \oT K_{i1}) \oP (L_{i1} \oT K_{i0}))] \nonumber \\
	&= \oP_i \big( F_{i0}(L_{i0} \oT K_{i0}) \oP  F_{i0}(L_{i1} \oT K_{i1}) \nonumber \\
	&\qquad \oP  F_{i1}(L_{i0} \oT K_{i1}) \oP F_{i1}(L_{i1} \oT K_{i0}) \big) \nonumber \\
	&\cong \oP_i \big( (F_{i0}L_{i0} \oT K_{i0})  \oP (F_{i1}L_{i1} \oT K_{i1}) \nonumber \\
	&\qquad \oP  (F_{i0}L_{i0} \oT K_{i1}) \oP (F_{i1}L_{i1} \oT K_{i0}) \big) \nonumber \\
	&=\oP_i \big( [(F_iL_i)_0 \oT K_{i0}] \oP   [(F_iL_i)_1 \oT K_{i1}] \nonumber \\
	&\qquad \oP [(F_iL_i)_0 \oT K_{i1}] \oP [(F_iL_i)_1 \oT K_{i0}]  \big) \nonumber \\
	&=\oP_i \oP_{p,q} (FL)_{ip} \oT K_{iq} \nonumber \\
	&=FL \bT K
\end{align}
where we have used the fact that $F_{ip} \dashv G_{ip}$, $i,p=0,1$, so it commutes with colimits as a left adjoint. Note that we used (for $p = q + r$):
\beq
	F_{ip}(L_{iq} \oT K_{ir}) \cong F_{iq}L_{iq} \oT K_{ir} \nonumber
\eeq
throughout. We need the following lemma:
\begin{Ghom}
If for any $K,L \in \Sstdot$ there is a natural isomorphism $F(L \boxtimes K) \cong F(L) \boxtimes K$, then for any $X \in \cC = \sksMod_*$:
\beq
G\uhomC(K,X) \cong \uhomSstdot(K, G(X)) \nonumber
\eeq
\end{Ghom}
\begin{proof}
It suffices to write:
\begin{align}
	\HomSstdot(L, &G\uhomC(K,X)) = \oP_{i,p} \HomS(L_{ip}, G_{ip} \uhomuC(K,X)_{ip}) \nonumber \\
	&=\oP_i [\HomS(L_{i0},G_{i0}(X_{i0}^{K_{i0}} \oP X_{i1}^{K_{i1}})) \nonumber \\
	&\qquad \oP \HomS(L_{i1},G_{i1}(X_{i0}^{K_{i1}} \oP X_{i1}^{K_{i0}}))] \nonumber \\
	&=\oP_i [\HomS(L_{i0},G_{i0}X_{i0}^{K_{i0}}) \oP \HomS(L_{i0},G_{i0}X_{i1}^{K_{i1}}) \nonumber \\
	&\qquad \oP \HomS(L_{i1},G_{i1}X_{i0}^{K_{i1}}) \oP \HomS(L_{i1},G_{i1}X_{i1}^{K_{i0}})] \nonumber \\
	&\cong \oP_i [\HomuC(F_{i0}L_{i0},X_{i0}^{K_{i0}}) \oP \HomuC(F_{i0}L_{i0},X_{i1}^{K_{i1}}) \nonumber \\
	&\qquad \oP \HomuC(F_{i1}L_{i1},X_{i0}^{K_{i1}}) \oP \HomuC(F_{i1}L_{i1},X_{i1}^{K_{i0}})] \nonumber \\
	&\cong \oP_i [\HomuC(F_{i0}L_{i0} \oT K_{i0},X_{i0}) \oP \HomuC(F_{i0}L_{i0} \oT K_{i1},X_{i1}) \nonumber \\
	&\qquad \oP \HomuC(F_{i1}L_{i1} \oT K_{i1},X_{i0}) \oP \HomuC(F_{i1}L_{i1} \oT K_{i0},X_{i1})] \nonumber \\
	&=\oP_i [\HomuC((FL \bT K)_{i0},X_{i0}) \oP \HomuC((FL \bT K)_{i1}, X_{i1})] \nonumber \\
	&=\oP_i \Hom_{\sksMod}((FL \bT K)_i, X_i) \nonumber \\
	&=\Hom_{\cC}(FL \bT K,X) \nonumber \\
	&\cong \Hom_{\cC}(F(L \bT K),X) \nonumber \\
	&\cong \Hom_{\cC}(L \bT K, GX) \nonumber \\
	&= \oP_i [\HomuC(L_{i0} \oT K_{i0},(GX)_{i0}) \oP \HomuC(L_{i1} \oT K_{i1},(GX)_{i0}) \nonumber \\
	&\qquad \oP \HomuC(L_{i0} \oT K_{i1},(GX)_{i1}) \oP \HomuC(L_{i1} \oT K_{i0},(GX)_{i1})] \nonumber \\
	&\cong \oP_i [\HomS(L_{i0},\uhom_{\SetD}(K_{i0},(GX)_{i0})) \nonumber \\
	&\qquad \oP \HomS(L_{i1},\uhom_{\SetD}(K_{i1},(GX)_{i0})) \nonumber \\
	&\qquad \oP \HomS(L_{i0},\uhom_{\SetD}(K_{i1},(GX)_{i1})) \nonumber \\
	&\qquad \oP \HomS(L_{i1},\uhom_{\SetD}(K_{i0},(GX)_{i1}))] \nonumber  \\ 
	&=\oP_i \HomSd(L_i, \uhomSstdot(K,GX)_i) \nonumber \\
	&=\HomSstdot(L, \uhomSstdot(K, GX))\nonumber
\end{align}
with $S = \SetD$.
\end{proof}

\newpage

From there,
\beq
G(X^L) \rarr G(X^K) \times_{G(Y^K)} G(Y^L) \nonumber
\eeq
that is:
\beq
G \uhomC(L,X) \rarr G\uhomC(K,X) \times_{G \uhomC(K,Y)} G \uhomC(L,Y) \nonumber
\eeq
is equivalent to:
\beq
\uhomSstdot(L, GX) \rarr \uhomSstdot(K, GX) \times_{\uhomSstdot(K,GY)} \uhomSstdot(L,GY) \nonumber
\eeq
Now $q:X \rarr Y$ fibration in $\cC$ means $G(q): GX \rarr GY$ fibration in $\Sstdot$, and $\Sstdot$ being a bi-graded simplicial model category it satisfies the grsModCat axiom, so the above map is a fibration, trivial if either of $q$ or $i$ is, that is $\sksMod_*$ is a bi-graded simplicial model category.
\end{proof}

\subsubsection{Internal hom}
From \cite{CCF} and \cite{V} we know $\ksMod$ has an internal hom $\ucHom _{\ksMod}$. This comes from the fact that the tensor product $\oT_k$ on $\ksMod$ involves terms of mixed parity. In those references the internal hom is defined as follows:
\beq
\ucHom_{\ksMod}(M,N)_0 = \Hom_{\ksMod}(M,N) = \Hom_k(M,N)\nonumber
\eeq
while $\ucHom_{\ksMod}(M,N)_1$ is the set of morphisms $\phi: M \rarr N$ that reverse parity, i.e. those morphisms $M_0 \rarr N_1$ and $M_1 \rarr N_0$. We can see this is indeed the correct definition by starting from the formal definition for the internal hom:
\beq
\Hom_k(M \otimes_k N , P) \cong \Hom_k(M, \ucHom_{\ksMod}(N,P)) \nonumber
\eeq
Expanding the left hand side in full:
\begin{align}
\Hom(M_0\oT N_0,P_0) &\oP \Hom(M_1 \oT N_1,P_0) \nonumber \\
	&\oP \Hom(M_0 \oT N_1,P_1) \oP \Hom( M_1 \oT N_0,P_1) \nonumber \\
	\cong \Hom(M_0,& \Hom_k(N_0,P_0)) \oP \Hom(M_1,\Hom_k(N_1,P_0)) \nonumber \\
	&\oP \Hom(M_0,\Hom_k(N_1,P_1)) \oP \Hom(M_1,\Hom_k(N_0,P_1)) \nonumber
\end{align}
where we used the fact that $\kMod$ is a closed monoidal category in the sense of \cite{Ho}, with internal hom $\Hom_k$. It follows as claimed above that:
\begin{align}
	\ucHom(N,P)_0 &= \Hom(N_0,P_0) \oP \Hom(N_1,P_1) \nonumber \\
	\ucHom(N,P)_1 &= \Hom(N_1,P_0) \oP \Hom(N_0,P_1) \nonumber
\end{align}
Note that this shows what we called the enhanced hom $\Hom^+$ earlier is actually the internal hom: $\Hom^+ = \ucHom$. Henceforward we will use the notation $\ucHom$.\\

For the tensor product in $\ksMod_*$:
\begin{align}
	\Hom(M \bT N, P) &= \oPi \Hom(M_i \oT N_i, P_i) \nonumber \\
	& \cong  \oPi \Hom(M_i, \ucHom_{\ksMod}(N_i,P_i))\nonumber \\
	& = \Hom(M, \ucHom_{\ksMod_*}(N,P))\nonumber
\end{align}
if $\ucHom_{\sksMod_*}(N,P) = \oPi \ucHom_{\sksMod}(N_i,P_i)$. We prolong this to the simplicial case levelwise in the simplicial index:
\beq
\xymatrix{
\Hom_{\ksMod_*}(M_n \bT N_n, P_n) \ar@{~>}[d] \ar[r]^-{\cong} & \Hom(M_n, \ucHom_{\ksMod_*}(N_n,P_n)) \ar@{~>}[d] \nonumber \\
\Hom_{\sksMod_*}(M \bT N, P) \ar@{.>}[r]_-{\cong} & \Hom(M, \ucHom_{\sksMod_*}(N,P))
}
\eeq
At this point $\cC=\sksMod_*$ the category of simplicial graded $k$-supermodules is endowed with the level-wise tensor product $\bT$ for which we have an internal hom $\ucHom_{\sksMod_*}$, and is endowed with the "usual" model structure whereby weak equivalences and fibrations are defined on the underlying bi-graded simplicial sets. We have also proved $\cC$ is a bi-graded simplicial model category with:
\beq
\uHomC(M,N)_{n\times n} \cong \Hom_{\cC}(M \boxtimes \Delta^2 \Delta^n, N) \nonumber
\eeq
Let:
\beq
\Comm(\cC) = \sksAlg_* \nonumber
\eeq
where again commutativity is defined on superalgebras by $ab = (-1)^{|a||b|}ba$ on homogeneous elements.\\

Now for $A \in \sksAlg_*$, denote by $\AsMod_*$ the category of objects of $\sksMod_*$ that are $A$-modules, with $(\AsMod_*)_n = A_n \text{-sMod}$. A morphism $f$ of $A$-supermodules is a simplicial morphism of supermodules $f:M \rarr N$ such that $f(am) = af(m)$ for $a \in A$, $m \in M$, $M \in \AsMod_*$. To be more specific $f$ has components $f_n: M_n \rarr N_n$ in $\Hom_{A_n}(M_n, N_n)$ and:
\begin{align}
	f(am)&= \oP_n f_n(a_n m_n) \nonumber \\
	&=\oP_n a_n f_n(m_n) \nonumber \\
	&=af(m) \nonumber
\end{align}

There is a tensor product $M \tbTA N$ defined level-wise:
\beq
(M \tbTA N)_n = M_n \tbT_{A_n} N_n \nonumber
\eeq

\section{Pre-homotopical Algebraic context}
The notion of Homotopical Algebraic context is introduced in \cite{TV4}. The reader is referred to that reference for a full definition. We call it pre-homotopical for the simple reason that we do not use $\cC_0$, nor do we need $\cA$, a sub-category of good objects, or equivalently we just work with a symmetric monoidal model category $\cC$ satisfying only the first four assumptions of \cite{TV4} which we will adapt to our bi-graded setting.\\

\subsection{$\sksMod_*$ symmetric monoidal model category}
We use the definition of symmetric monoidal model category as presented in \cite{Ho}. We already have a monoidal structure $(\bT_k, \alpha, \lambda, \rho, \Delta k)$ on $\ksMod_*$ that we prolong to a monoidal structure on $\sksMod_*$ level-wise. For that tensor product, we have an internal hom $\ucHom_{\sksMod_*}$. We have half of an adjunction of two variables:
\beq
\Hom_r = \ucHom_{\sksMod_*} \nonumber
\eeq
with:
\beq
\xymatrix{
\Hom_{\sksMod_*}(M \bT N, P) \ar[r]^-{\phi_r}_-{\cong} &\Hom_{\sksMod_*}(M, \ucHom_{\sksMod_*}(N,P))} \nonumber
\eeq
We have the braiding $\sigma: M \bT N \xrightarrow{\cong} N \bT M$ in $\ksMod_*$ that we prolonged to $\sksMod_*$. Now:
\beq
\xymatrix{
\Hom_{\sksMod_*}(M \bT N, P) \ar[d]_{\cong}^{\sigma^*} \ar@{.>}[dr]^-{\phi_l=\phi_r \circ \sigma^* } \\
\Hom_{\sksMod_*}(N \bT M, P) \ar[r]^-{\phi_r}_-{\cong} & \Hom_{\sksMod_*}(N, \ucHom_{\sksMod_*}(M,P)) } \nonumber
\eeq
\newpage
\noindent
so $(\bT, \ucHom_{\sksMod_*}, \ucHom_{\sksMod_*}, \phi_r, \phi_r \circ \sigma^*)$ is an adjunction of two variables, hence we have a closed monoidal structure on $\sksMod_*$ making it into a closed monoidal category, as defined in \cite{Ho}.\\

We now show $\bT$ is a Quillen bifunctor in $\cC = \sksMod_*$. Let $f: U \rarr V$ be a cofibration in $\cC$, $g: W \rarr X$ a cofibration in $\cC$ as well. This means they are cofibrations on the underlying bi-graded simplicial sets, so entry-wise cofibrations of simplicial sets, i.e. inclusions. More generally, maps in $\sksMod_*$ are fibrations, cofibrations or weak equivalences if and only if they are respectively fibrations, cofibrations or weak equivalences in $\skMod$, or equivalenty said, if they are so entry-wise.\\

We need:
\beq
f \Box g: (V \bT W) \coprod_{U \bT W} (U \bT X) \rarr V \oT X \nonumber
\eeq
cofibration in $\cC$, trivial if either of $f$ or $g$ is. \\

$f:U \rarr V$ and $g: W \rarr X$ being cofibrations in $\sksMod_*$ means we have entry-wise cofibrations in $\skMod$ $f_{ip}: U_{ip} \rarr V_{ip}$ and $g_{ip}: W_{ip} \rarr X_{ip}$ for $i,p=0,1$, with $U = (U_{ip})$, $V = (V_{ip})$, $W = (W_{ip})$ and $X = (X_{ip})$. Consider the coproduct $(V \bT W) \coprod_{U \bT W} (U \bT X) = \oPi (V_i \oT W_i) \coprod_{U_i \oT W_i} (U_i \oT X_i)$, which we simply denote by $\coprod$, computed in $\sksMod_*$. Levelwise, it reads:
\beq
\xymatrix{
U_i \oT W_i  \ar[d] \ar[r] & U_i \oT X_i  \ar[d]\\
V_i \oT W_i \ar[r] &\coprod_i } \nonumber
\eeq
further decomposing into parity wise coproducts:
\beq
\xymatrix{
	(U_{i0} \oT W_{i0}) \oP (U_{i1} \oT W_{i1}) \ar[d] \ar[r] & (U_{i0} \oT X_{i0}) \oP (U_{i1} \oT X_{i1}) \ar[d] \\
	(V_{i0} \oT W_{i0}) \oP (V_{i1} \oT W_{i1}) \ar[r] & \coprod_{i0}
} \nonumber
\eeq
with a similar coproduct defining $\coprod_{i1}$, in such a manner that $\coprod = (\coprod_{ip})$. Note that the maps in this diagram are induced by $f_{ip}$ and $g_{ip}$. Thus the diagram defining $\coprod_{i0}$ further decomposes into two coproduct diagrams:
\beq
\xymatrix{
	U_{ip} \oT W_{ip} \ar[d] \ar[r] & U_{ip} \oT X_{ip} \ar[d] \\
	V_{ip} \oT W_{ip} \ar[r] & C_{ipp}
} \nonumber
\eeq
so that $\coprod_{i0} = C_{i00} \oP C_{i11}$. One would obtain a similar decomposition for $\coprod_{i1}$.
Working levelwise, we want:
\beq
V_i \oT W_i \coprod_{U_i \oT W_i} U_i \oT X_i \rarr V_i \oT X_i \label{star}
\eeq
to be a cofibration. But we know $f_{ip}:U_{ip} \rarr V_{ip}$ and $g_{iq}:W_{iq} \rarr X_{iq}$ are cofibrations, so $C_{ipq} = V_{ip} \oT W_{iq} \coprod_{U_{ip} \oT W_{iq}} U_{ip} \oT X_{iq} \rarr V_{ip} \oT X_{iq}$ is a cofibration for $p,q = 0,1$, $\oT$ being a Quillen bifunctor on $\skMod$, and all such maps recombine into \eqref{star}, which is therefore a cofibration. Now if either of $f$ or $g$ is trivial, so are its entries. For instance if this is true of $f$, the entries $f_{ip}$ are trivial, making $U_{ip} \oT W_{iq} \coprod_{U_{ip} \oT W_{iq}} U_{ip} \oT X_{iq} \rarr V_{ip} \oT X_{iq}$ trivial for $i,p,q = 0,1$, and those maps recombine into $f \Box g$ trivial. This shows that $\bT$ is a Quillen bifunctor.\\

Since $k$ is the unit for $\kMod$, $\Delta k$ the unit in $\ksMod_*$, if $c_*$ is the constant simplicial functor then, $c_*( k) = k_*$ is the unit for $\skMod$ and $c_*(\Delta k ) = \Delta k_*$ is the unit for $\sksMod_*$, a constant bi-graded simplicial object. If we call $Q$ the cofibrant replacement functor in $\sksMod_*$, entrywise that can be obtained from the cofibrant replacement functor on $\skMod$. We want:
\beq
Q \Delta k_* \boxtimes X \rarr \Delta k_* \boxtimes X \label{Qkx}
\eeq
a weak equivalence for any cofibrant object $X$ in $\sksMod_*$. That means the entries of $X$ are cofibrant in $\skMod$ as well, and this latter being a symmetric monoidal category (\cite{TV}, \cite{TV4}) we have $Q k_* \oT X_{ip} \rarr k_* \oT X_{ip}$ cofibrations for $i,p=0,1$, all of which recombine into \eqref{Qkx}, a cofibration, and this for all cofibrant $X$ in $\sksMod_*$. Finally, $\bT_k$ being symmetric in $\ksMod_*$, so is its prolongation to $\sksMod_*$. Thus we have proved:
\begin{sksModstSymmMonModCat}
$\sksMod_*$ is a symmetric monoidal model category.
\end{sksModstSymmMonModCat}

\subsection{Assumption 1}
This is referred to as Assumption 1.1.0.1. in \cite{TV4}. A homotopical algebraic context has 6 assumptions, we use the first four, that is why we call our theory as being based on a pre-homotopical context. In what follows, we use the fact that $\skMod$ satisfies our four assumptions (\cite{SS}).\\
\begin{ass1}
$\sksMod_*$ is proper, pointed, and for any $X,Y \in \sksMod_*$, the natural morphism:
\beq
	QX \coprod QY \rarr X \coprod Y \rarr RX \times RY \label{QXQYRXRY}
\eeq
is an equivalence. Further $\Ho(\sksMod_*)$ is an additive category.
\end{ass1}
\begin{proof}
$\skMod$ is proper and pointed, hence so is $\sksMod_*$. Let $X, Y \in \sksMod_*$ we show the map $QX \coprod QY \rarr X \coprod Y \rarr RX \times RY$ is a natural equivalence. This composition decomposes levelwise into:
\beq
QX_i \coprod QY_i \rarr X_i \coprod Y_i \rarr RX_i \times RY_i \nonumber
\eeq
which itself decomposes as follows:
\beq
	\oP_{p,q = 0,1} QX_{ip} \coprod QY_{iq} \rarr \oP_{p,q}X_{ip} \coprod Y_{iq} \rarr \oP_{p,q} RX_{ip} \times RY_{iq} \nonumber
\eeq
thus entrywise, for $p,q$ fixed:
\beq
QX_{ip} \coprod QY_{ip} \rarr X_{ip} \coprod Y_{iq} \rarr RX_{ip} \times RY_{iq} \nonumber 
\eeq
This however is an equivalence since $\skMod$ is assumed to satisfy this assumption. All such maps recombine into \eqref{QXQYRXRY}, which is therefore an equivalence. Hence the assumption holds in $\sksMod_*$. Finally $\Ho(\sksMod_*)$ is an additive category since this is true of $\Ho(\skMod)$.
\end{proof}

\subsection{Assumption 2}
\begin{ass2}
	Let $\cC = \sksMod_*$ and $A \in \Comm(\cC) = \sksAlg_*$. Define a morphism in $\AsMod_*$ to be a fibration or an equivalence if it is so on the underlying objects of $\cC$. With this notion, $\AsMod_*$ is a combinatorial, proper model category, on which a tensor product $X \tbTA Y$ is defined as $\oP_{i,p} X_{ip} \oT_{A_{ip}}Y_{ip}$. With the monoidal structure defined by $-\tbTA-$, $\AsMod_*$ is a symmetric monoidal model category.
\end{ass2}
\begin{proof}
We first show properness, that is, left-properness and right-properness. Starting with left-properness, we must show any pushout of a weak equivalence along a cofibration is a weak equivalence. Consider any such pushout in $\AsMod_*$:
\beq
\xymatrix{
X \ar[d]_{\simeq} \ar[r] & U \ar[d]\\
Y \ar[r]_{\text{cofibr.}} & V } \nonumber
\eeq
those maps being defined on the underlying objects in $\sksMod_*$ we have that same diagram in $\sksMod_*$, which is proper, so $U \rarr V$ is a weak equivalence in $\sksMod_*$, hence so it is in $\AsMod_*$. We show right properness in like manner. \\

Regarding being combinatorial, we first show $\AsMod_*$ is cofibrantly generated. Since we will use the notations of \cite{Ho} where that concept is discussed, we remind the reader about those notations: recall that if $M$ is a model category, to say that $M$ is cofibrantly generated means there are small sets $I$ and $J$ (again not worrying about universe considerations) of morphisms in $M$, and a small cardinal $\alpha$ such that:
\begin{enumerate}
\item domains and codomains of maps in $I$ and $J$ are $\alpha$-small.
\item the class of fibrations is $J$-inj.
\item the class of trivial fibrations is $I$-inj.
\end{enumerate}
	See \cite{Ho} for the relation between $I$ and $J$. Since $\AMod$ is cofibrantly generated for $A \in \skCAlg$, it follows $A_{ip} \dashMod_{iq}$, $i,p,q=0,1$ is cofibrantly generated for $A = (A_{ip}) \in \sksAlg_*$, with small sets $I_{ipq}$ and $J_{ipq}$ such that domains and codomains of maps in those sets are $\alpha_{ipq}$-small for some $\alpha_{ipq}$. Then:
\beq
	\AsMod_*= (\oP_{p,q=0,1}A_{0p} \dashMod_{0q}) \oP (\oP_{r,s=0,1}A_{1r} \dashMod_{1s}) \nonumber
\eeq
where each summand is of the form $\AMod$ for some $A \in \skCAlg$, cofibrantly generated. Then define $I= (I_{ipq})$, $J= (J_{ipq})$, with $\alpha = \text{max}\{\alpha_{ipq} \}$, with $I$-inj the fibrations in $\AsMod_*$, $J$-inj the trivial fibrations in $\AsMod_*$, thereby cofibrantly generated. Finally recall from \cite{TV} for example, that a category $\cC$ is locally presentable if there exists a small set of $\alpha$-small objects $\cC_0 \subset \cC$ for some cardinal $\alpha$, such that any object in $\cC$ is an $\alpha$-filtered colimit of objects in $\cC_0$. It is clear a bi-graded category is locally presentable if it is so entry-wise. Then a combinatorial model category is a cofibrantly generated model category whose underlying category is locally presentable. Since for $A \in \skCAlg$, $\AMod$ is combinatorial, in particular it is locally presentable, so is $\AsMod_*$ for $A \in \sksAlg_*$, and being cofibrantly generated as well, it is therefore combinatorial.\\

We now show $\AsMod_*$ is a symmetric monoidal model category. We first prove $-\tbTA-$ gives a monoidal structure on $\AsMod_*$. $\AMod$ has an internal hom $\uHomA$ for $A \in \skCAlg$:
\beq
\Hom(M \oTA N, P) \cong \Hom(M, \uHomA(N,P)) \nonumber
\eeq
Now for $A \in \sksAlg_*$, $M,N,P \in \AsMod_*$, we have:
\begin{align}
	\Hom (M \tbTA N,P) &=\Hom(\oP_{i,p} M_{ip} \oT_{A_{ip}}N_{ip}, P_{ip}) \nonumber \\
	&=\oP_{i,p} \Hom(M_{ip} \oT_{A_{ip}}N_{ip}, P_{ip})    \nonumber \\
	&\cong \Hom(M_{ip}, \uHom_{A_{ip}}(N_{ip}, P_{ip}))  \nonumber \\
	&=\Hom(M, \uHom_A(N,P)) \nonumber
\end{align}
so there is an internal hom $\uHomA$ in $\AsMod_*$, with 
\beq
	\uHomA(N,P) = (\uHom_{A_{ip}}(N_{ip}, P_{ip})) \nonumber
\eeq
So far we have half of an adjunction of two variables:
\beq
\xymatrix{
\Hom_{\AsMod_*}(M \tbTA N, P) \ar[r]^-{\phi_r}_-{\cong} &\Hom_{\AsMod_*}(M, \uHomA(N,P)) }\nonumber
\eeq
Recall that we have the braiding $\sigma: M \bT N \rarr N \bT M$ in $\sksMod_*$ defined entry-wise, so accordingly we also have a braiding operator $\sigma_A: M \tbTA N \rarr N \tbTA M$. We have:
\beq
\xymatrix{
\Hom_{\AsMod_*}(M \tbTA N, P) \ar[d]_{\cong}^{\sigma_A^*} \ar@{.>}[dr]^{\phi_l = \phi_r \circ \sigma_A^*} \\
\Hom_{\AsMod_*}(N \tbTA M, P) \ar[r]^-{\phi_r}_-{\cong} &\Hom_{\AsMod_*}(N, \uHomA(M,P))}
\eeq
\newpage
\noindent
so $(\tbTA, \uHomA, \uHomA, \phi_r, \phi_r \circ \sigma_A^*)$ is an adjunction of two variables, hence we have a closed monoidal structure on $\AsMod_*$ making it into a closed monoidal category.\\

We now show $\tbTA$ is a Quillen bifunctor. Write $\cC = \AsMod_*$, let $f:U \rarr V$ and $\cC$, $g: W \rarr X$ be cofibrations in $\cC$. We need:
\beq
f \Box g: (V \tbTA W) \coprod_{U \tbTA W} (U \tbTA X) \rarr V \tbTA X \nonumber
\eeq
to be a cofibration in $\cC$, trivial if either of $f$ or $g$ is. This decomposes entry-wise into:
\beq
	(V_{ip} \oT_{A_{ip}} W_{ip}) \coprod_{U_{ip} \oT_{A_{ip}} W_{ip}} (U_{ip} \oT_{A_{ip}} X_{ip}) \rarr V_{ip} \oT_{A_{ip}} X_{ip} \nonumber
\eeq
for $i,p =0,1$, to be a cofibration in $\cC$, trivial if either of $f$ or $g$ is. This follows from the same computation done when showing that $\bT$ was a Quillen bifunctor on $\sksMod_*$.\\

Now $A$ being the unit in $\AsMod_*$ for $\tbTA$, $QA \rarr A$ a cofibrant replacement, for any cofibrant object $X$, we show $QA \tbTA X \rarr A \tbTA X$ is a weak equivalence. This follows from the definition of $\tbTA$ and the fact that for $A \in \skCAlg$, $X$ cofibrant in $\skMod$, $QA \oTA X \rarr A \oTA X$ is a weak equivalence. Finally, since $M \tbTA N = \oP_{i,p} M_{ip} \oT_{A_{ip}} N_{ip}$ and $- \oTA - $ defines a symmetric monoidal structure on $\AMod$, for $A \in \skCAlg$, it follows that $- \tbTA - $ defines a symmetric monoidal structure on $\AsMod_*$ as well for $A \in \sksAlg_*$. This completes the proof.
\end{proof}

\subsection{Assumption 3}
\begin{ass3}
For $A \in \Comm(\cC) = \sksAlg_*$, for any $M$ fibrant in $\AsMod_*$, the functor $-\tbTA M : \AsMod_* \rarr \AsMod_*$ preserves equivalences.
\end{ass3}
\begin{proof}
	For $M$ fibrant in $\AsMod_*$, writing $1$ for the terminal object of $\AsMod_*$, $M \rarr 1$ being a fibration, we have $M_{ip} \rarr 1_{ip}$ a fibration in $\skMod$ for $i,p=0,1$, i.e. $M_{ip}$ fibrant in $A_{ip} \dashMod$ for $i,p=0,1$. Let $f$ be an equivalence in $\AsMod_*$, $f=(f_{ip})$, $f_{ip}$ a weak equivalence in $A_{ip} \dashMod$ for $i,p=0,1$. We know for $A \in \skCAlg$, $M \in \AMod$, $-\oTA M$ preserves weak equivalences, so having $M_{ip}$ fibrant and $f_{ip}$ an equivalence, $f_{ip} \oT_{A_{ip}} M_{ip}$ is a weak equivalence for $i,p=0,1$, hence so is $f \tbTA M$ for $M \in \AsMod_*$.
\end{proof}

\subsection{Assumption 4}
\begin{ass4}
For $A \in \Comm(\cC) = \sksAlg_*$, there exist categories $\ACommC$ and $A-\Comm_{nu}(\cC)$ (non-unital) whose morphisms are fibrations and equivalences if they are so on the underlying objects of $\cC$. This makes those categories into combinatorial proper model categories. If $B$ is cofibrant in $\ACommC$, the functor $B \tbTA - : \AsMod_* \rarr \BsMod_*$ preserves equivalences.
\end{ass4}
\begin{proof}
Given how fibrations and equivalences are defined in those categories, proving they are combinatorial, proper model categories is done exactly the same way we proved those statements for $\AsMod_*$. Proving that $B \tbTA -$ preserves equivalences for $B$ cofibrant is done in the same manner that we proved $- \tbTA C$ preserves equivalences for $C$ fibrant in $\AsMod_*$.
\end{proof}

\section{Bi-graded derived algebraic stacks}
\subsection{Finitely presented morphisms}
Let $\cC = \sksMod_*$, so that $\Comm(\cC) = \sksAlg_* = M$, our model category of interest. We will need $M$ to be proper, but weak equivalences are defined on the underlying objects of $\sksMod_*$, and it being proper, so is $M$. We fix a bi-graded cofibrant resolution functor $(\Gamma: M \rarr M^{\Delta}, \iota)$, $\Gamma = (\Gamma_{ip})$ (\cite{TV}), with $\Gamma_{ip}: \uM \rarr \uM^{\Delta}$ cofibrant resolution functors in $\skMod$ for $i,p=0,1$ where as usual the underlined object corresponds to an ungraded counterpart. Here $M = \Comm(\sksMod_*)$, so $\uM = \Comm(\underline{\sksMod_*}) = \Comm(\skMod) = \skCAlg$. We have $c^* = (c^*)_{2X2}$, with weak equivalences: $\Gamma_{ip}(A) \xrightarrow{\iota_{ip}(A)} c^*A$ for $i,p=0,1$, with $c^*$ the constant cosimplicial object functor. Since we consider bi-graded morphisms in $M$, we have:
\begin{align}
\Map_M(A,B)&=\Hom_M(\Gamma(A),B) \nonumber \\
	&=\oPi \Hom_{\sksAlg}(\Gamma_i(A_i),B_i) \nonumber \\
	&= \oP_{i,p} \Hom_{\uM}(\Gamma_{ip} A_{ip}, B_{ip}) \nonumber \\
	&= \oP_{i,p} \Map_{\uM,ip}(A_{ip}, B_{ip}) \nonumber
\end{align}
so we define $\Map_{\uM} = \Map_{\uM,0} \oP \Map_{\uM,1}$ so that we have:
\beq
\Map_M = (\Map_{\uM,ip}(-,-)) = (\Hom_{\uM}(\Gamma_{ip}-,-)) \nonumber
\eeq

Now let $A \rarr B$ be a morphism in $M = \sksAlg_*$. Consider any filtered diagram of objects under $A$, $\{C_i\}_{i \in I} \in A/M$. We have a natural morphism:
\begin{align}
\hocolim_{i \in I} & \Map_{A/M}(B,C_i) \nonumber \\
	&= \Big( \hocolim_{i \in I} \Map_{A_{jp}/\uM} (B_{jp}, C_{i,jp}) \Big) \nonumber \\
	& \xrightarrow{\cong} \Big( \Map_{A_{jp}/\uM}(B_{jp}, \hocolim_{i \in I} C_{i,jp}) \Big) \nonumber \\
&=\Map_{A/M}(B, \hocolim_{i \in I}C_i) \nonumber
\end{align}
and those objects are defined in $\Ho(\Sstdot)$. This shows a morphism in $M$ is finitely presented if and only if it is so in $\uM = \skCAlg$.\\

\subsection{Bi-graded derivations and cotangent complexes}
For $\cC = \sksMod_*$, $A \in \Comm(\cC)$, $M \in$ $\ACommC$, we define a commutative monoid $A \oP M$ with $A \coprod M$ as underlying object, just as in \cite{TV4}, with multiplication:
\beq
(A \oP M) \bT (A \oP M) \xrightarrow{\pi} A \oP M \nonumber
\eeq
defined by:
\begin{align}
(A \coprod M) &\bT (A \coprod M) \nonumber \\
&=A \bT A \coprod A \bT M \coprod M \bT A \coprod M \bT M \nonumber \\
&\longrightarrow A \coprod M \nonumber
\end{align}
where:
\begin{align}
\mu \coprod *: &A \bT A \xrightarrow{\mu} A \rarr A \coprod M \nonumber\\
* \coprod \rho: &A \bT M \xrightarrow{\rho} M \rarr A \coprod M \nonumber \\
&M \bT M \rarr M \rarr A \coprod M \nonumber
\end{align}
Here $\mu$ and $\rho$ have entries as defined in \cite{TV4} and where in the last line we use the fact that $M$ is a commutative monoid in $\AsMod_*$. To show $A \oP M$ is commutative, it suffices to write:
\beq
A \coprod M = \oPi A_i \coprod M_i = \oPi \oP_p A_{ip} \coprod M_{ip} \nonumber
\eeq
Then:
\begin{align}
	(A \coprod M) \bT (A \coprod M) &= \oPi (A_i \coprod M_i) \oT (A_i \coprod M_i) \nonumber \\
	&= \oPi (\oP_p A_{ip} \coprod M_{ip}) \oT (\oP_q A_{iq} \coprod M_{iq}) \nonumber \\
	&= \oPi \oP_{p,q}(A_{ip} \coprod M_{ip}) \oT (A_{iq} \coprod M_{iq}) \nonumber
\end{align}
Now $A \in \ksMod_*$ is super-commutative if $\mu \circ \sigma = \mu$, that is for any homogeneous elements $x,y$ in $A$, $xy = \mu( x \oT y) = (-1)^{|x||y|} yx$. Presently we have $\pi$ on:
\begin{align}
	(A \oP M) \bT (A \oP M) &= \oPi \oP_{p,q}(A_{ip} \oT A_{iq}) \coprod (A_{ip} \oT M_{iq}) \nonumber \\
	&\coprod (M_{ip} \oT A_{iq}) \coprod (M_{ip} \oT M_{iq}) \nonumber
\end{align}
and on each entry $\pi$ is super-commutative, so $A \oP M$ is super-commutative.\\

Now for $A \rarr B$ a morphism in $\Comm(\cC)$, $M \in$ $\BCommC$, the morphism $id \coprod *: B \oP M \rarr B$ is a morphism in $\ACommC$, hence we can regard $B \oP M \in \ACommC/B$.
\begin{der}
For a morphism $A \rarr B$ in $\Comm(\cC)$, $M \in \BCommC$, $\cC = \sksMod_*$, we define, as in \cite{TV4}, the set of $A$-derivations $B \rarr M$ as:
\beq
\DerA(B,M) = \Map_{\ACommC/B}(B, B \oP M) \in \Ho(\Sstdot) \nonumber
\eeq
where $B \rarr B \oP M$ is a section of $id \coprod *$.
\end{der}
We have the following result, much as in \cite{TV4}:
\begin{MapDer}
	For any morphism $A \rarr B$ in $\sksAlg_*$, there is an object $\LBA \in \BsMod_*$, there is a $d \in \pi_0(\DerA(B, \LBA))$ such that for any $M = (M_{ip})$, $M_{ip} \in B_{ip}\dashMod$, i.e. $ M \in \BsMod_*$, the natural induced morphism:
\beq
d^*: \Map_{\BsMod_*}(\LBA,M) \rarr \DerA(B,M) \nonumber
\eeq
is an isomorphism in $\Ho(\Sstdot)$.
\end{MapDer}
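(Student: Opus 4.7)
The plan is to reduce the existence of $\LBA$ to the ungraded case. Throughout the paper, every piece of structure on $\sksMod$ and its category of commutative monoids $\sksAlg$ has been built diagonally from the ungraded versions, and the present statement should be no exception. The strategy is: show that $\DerA(B, -)$ decomposes as a direct sum of ungraded derivation functors along the parity grading, invoke the ungraded cotangent complex of Toen-Vezzosi componentwise, and reassemble the components into a single graded object $\LBA \in \BsMod$.

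First I would establish
\beq
\DerA(B, M) \cong \oPi \mathbb{D}\text{er}_{A_i}(B_i, M_i) \nonumber
\eeq
in $\Ho(\Sd)$. For this, observe that $B \oP M$ was constructed using $\coprod$ and a multiplication $\pi$ defined through $\bT$, both of which split along the grading; hence $B \oP M = (B_0 \oP M_0) \oP (B_1 \oP M_1)$ with $B_i \oP M_i$ an object of $A_i\text{-Comm}(\ucC)/B_i$. Combined with the splitting $\Map_M = \Map_{\uM,0} \oP \Map_{\uM,1}$ already established in the finitely presented morphisms subsection, this yields the desired decomposition.

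Next, for $i=0,1$ the ungraded cotangent complex $\LAiBi \in B_i\text{-sMod}$ together with the universal derivation $d_i \in \pi_0(\mathbb{D}\text{er}_{A_i}(B_i, \LAiBi))$ is supplied by the corresponding statement in \cite{TV2}. I would then set
\beq
\LBA := \LBAzero \oP \LBAone \in \BsMod, \qquad d := d_0 \oP d_1 \nonumber
\eeq
noting that $\pi_0$ on $\Sd$ is computed componentwise so that $d$ indeed lives in $\pi_0(\DerA(B, \LBA))$. Under the decomposition above, the induced map $d^*$ becomes $d_0^* \oP d_1^*$, a direct sum of isomorphisms in $\Ho(\sSet)$, hence an isomorphism in $\Ho(\Sd)$ since weak equivalences in $\Sd$ were defined componentwise.

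The main obstacle is not really mathematical but notational: confirming that the direct sum decomposition of $\DerA$ is natural in $M$ and compatible with the identification $\Map_{\BsMod}(-,-) \cong \oPi \Map_{B_i\text{-sMod}}(-,-)$, and that the adjoint correspondences from Assumption 4 respect this parity splitting. Once that naturality is pinned down, the whole proposition becomes the direct sum of two instances of the ungraded existence theorem, and no new ingredient beyond \cite{TV2} and the diagonal philosophy of the paper is needed.
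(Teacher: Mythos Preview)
Your proposal is correct and follows essentially the same route as the paper: decompose $\DerA(B,M)$ and $\Map_{\BsMod}$ along the parity grading, invoke the ungraded cotangent complex from \cite{TV2} for each component $A_i \rarr B_i$, and set $\LBA = \LBAzero \oP \LBAone$ with $d = d_0 \oP d_1$ so that $d^* = d_0^* \oP d_1^*$ is a componentwise isomorphism. The paper's proof is in fact terser than yours, omitting the explicit check that $B \oP M$ splits and the naturality remark, so nothing further is needed.
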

Note that for $\psi \in \Map_{\BsMod_*}(\LBA, M)$, the element $d^* \psi$ is constructed as follows:
\beq
\xymatrix{
&\Gamma \LBA \ar[d] \ar[r]^{\psi} &M \ar[d]\\
B \ar[r]^-d & B\oP  \Gamma \LBA \ar[r]^{B \oP \psi} &B \oP M } \nonumber
\eeq
\begin{proof}
A morphism $A \rarr B \in \Comm(\cC)$ gives rise to entry maps $A_{ip} \rarr B_{ip} \in \Comm(\ucC)$, with $\ucC = \skMod$, for $i,p=0,1$, so we are guaranteed by \cite{TV4} that there is some $\LBipAip \in B_{ip}\dashMod$ and a $d_{ip} \in \pi_0(\Der_{A_{ip}}(B_{ip}, \LBipAip))$ such that for any $M_{ip} \in B_{ip}\dashMod$:
\beq
	d_{ip}^*: \Map_{B_{ip}\dashMod}(\LBipAip, M_{ip}) \rarr \Der_{A_{ip}}(B_{ip}, M_{ip}) \nonumber
\eeq
is an isomorphism in $\Ho(\SetD)$. We view $\LBA = (\LBipAip)$ as an object of $(B_{ip}\dashMod_{ip}) \subset \BsMod_*$, and the same is true of $M = (M_{ip})$. Then it suffices to write:
\begin{align}
	\Map_{\BsMod_*}(\LBA,M) &= \oP_{i,p} \Map_{B_{ip}\dashMod}(\LBipAip, M_{ip}) \nonumber \\
	&\xrightarrow{(d^*_{ip})} \oP_{i,p} \Map_{A_{ip}-\Comm(\cC)/B_{ip}}(B_{ip}, B_{ip} \oP M_{ip}) \nonumber \\
&=\Map_{\ACommC/B}(B, B \oP M) \nonumber \\
&=\DerA(B,M) \nonumber
\end{align}
with $d = (d_{ip}) \in \pi_0(\DerA(B,\LBA))$.
\end{proof}
\begin{LBAdef}
Let $A \rarr B$ a morphism in $\Comm(\sksMod_*)$. The $B$-module $\LBA \in \Ho(\BsMod_*)$ is called as in \cite{TV4} the cotangent complex of $B$ over $A$. If $A = 1 = \Delta k_*$, write $\LB = \mathbb{L}_{B/1}$, which we will call the cotangent complex of $B$.
\end{LBAdef}

\subsection{\'{e}tale morphisms}
We have shown that $\AsMod_*$ is a symmetric monoidal model category with the tensor product $\tbTA$. This means its homotopy category $\Ho(\AsMod_*)$ has a natural symmetric monoidal structure $\tbTAL$.
\begin{formetale}
Let $f:A \rarr B$ be a morphism in $\Comm(\sksMod_*) = \sksAlg_*$. The morphism $f$ is said to be formally \'{e}tale if:
\beq
\LA \tbTA B \xrightarrow{\cong} \LB \nonumber
\eeq
or entry-wise:
\beq
	\bL_{A_{ip}} \oT_{A_{ip}} B_{ip} \xrightarrow{\cong} \bL_{B_{ip}} \nonumber
\eeq
for $i,p=0,1$, i.e. $f$ is formally \'{e}tale if it is so entry-wise.
\end{formetale}
Note that this is a well-defined morphism: if $p = 0$, the parity of $\bL_{A_{ip}} \oT_{A_{ip}} B_{ip}$ is $p + p + p = 0$, so we do have $\bL_{A_{i0}} \oT_{A_{i0}}B_{i0} \rarr \bL_{B_{i0}}$, and if $p = 1$, the parity of $\bL_{A_{i1}} \oT_{A_{i1}}B_{i1} $ is 3 mod 2, which is 1, so we also have a well-defined map  $\bL_{A_{i1}} \oT_{A_{i1}}B_{i1} \rarr \bL_{B_{i1}}$.\\

\begin{etale}
We define a morphism of commutative monoids in $\sksMod_*$ to be \'{e}tale if it is finitely presented and formally \'{e}tale.
\end{etale}
This is the same definition as initially presented in \cite{TV4}. Following the same reference, we will use the same terminology for morphisms in $\Ho(\CommC)$ and for the corresponding morphisms of representable stacks. We typically write $\AffC = (\CommC)^{\op}$ but if $\cC = \sksMod_*$, $\CommC = \sksAlg_*$, $(\sksAlg_*)^{\op}$ is then denoted, much as in \cite{TV4}, $\kDsAff_*$.\\

If $\cC = \sksAlg_* \subset \sksMod_*$, $A \in \cC$, we define the underlying bi-graded space of $A$ as:
\begin{align}
|A| &= \MapC(\Delta k_*, A) \nonumber \\
	&=(\Map_{\ucC}(k_*, A_{ip})  \nonumber \\
	&= (|A_{ip}|) \in \Sstdot \nonumber
\end{align}
so that following \cite{TV4}, we define:
\begin{align}
\pi_k(A) &= \pi_k(|A|, *) \nonumber \\
	&=(\pi_k(|A_{ip}|, *_{ip})) \nonumber \\
	&= (\pi_k A_{ip})\nonumber
\end{align}

Write $\Spec A \in \kDsAff_*$ if $A \in \sksAlg_*$. Following \cite{TV4}, we define families $\{ \Spec A_i \rarr \Spec A \}_{i \in I}$ to be \'{e}tale covering families if and only if there is some $J \subset I$ such that for all $i \in I$:
\beq
\pi_n A \tilde{\bT}_{\pi_0 A} \pi_0 A_i \xrightarrow{\cong} \pi_n A_i \label{pinAbTarr}
\eeq
and
\beq
\coprod_{j \in J} \Spec \pi_0 A_j \rarr \Spec \pi_0 A \nonumber
\eeq
is \'{e}tale and surjective, where $\Spec A \rarr \Spec B$ is \'etale if $B \rarr A$ is an \'etale morphism in $\sksAlg_*$. 
\newpage

\noindent
Note that \eqref{pinAbTarr} reads as follows entry-wise:
\beq
\pi_n A_{jp} \oT_{\pi_0 A_{jp}} \pi_0 A_{i,jp} \xrarr{\cong} \pi_n A_{i,jp} \nonumber
\eeq
Now if $A \in \ksAlg_*$:
\begin{align}
	\Spec A &= \Spec \oP_{i,p} A_{ip} \nonumber \\
	&= (\oP_{i,p} A_{ip})^{\op} \nonumber \\
	&= \oP_{i,p} A_{ip}^{\op} \nonumber \\
	&= \oP_{i,p} \Spec A_{ip} \nonumber
\end{align}
along with $\pi_0 A = (\pi_0 A_{ip})$, this gives us, for $A \in \sksAlg_*$:
\beq
\Spec \pi_0 A = \Spec \oP_{i,p} \pi_0 A_{ip} = \oP_{i,p} \Spec \pi_0 A_{ip} \nonumber
\eeq
It follows:
\begin{align}
	\coprod_{j \in J} \Spec \pi_0 A_j & = \coprod_{j \in J} \oP_{l,p} \Spec \pi_0 A_{j,lp} \nonumber \\
	&= \oP_{l,p} \coprod_{j \in J} \Spec \pi_0 A_{j,lp} \surjfleche^{\tet} \oP_{l,p} \Spec \pi_0 A_{lp} \nonumber 
\end{align}
so that entrywise:
\beq
\coprod_{j \in J} \Spec \pi_0 A_{j,lp} \surj \Spec \pi_0 A_{lp} \nonumber \\ 
\eeq
However, finitely presented morphisms in $\sksAlg_*$ are so entry-wise, and given the definition of formally \'etale morphisms and the definition of $\tbTA$, formally \'etale maps are so entry-wise as we argued. It follows \'etale morphisms in $\sksMod_*$ are so entry-wise, so the above morphism is entry-wise \'etale. We conclude that a family of coverings in $\kDsAff_*$ is an \'etale covering if and only if it is so entry-wise in $\kDAff$.\\

\subsection{Descent on super modules}
We now define the descent condition on supermodules, which is just an adaption to our setting of the work done in \cite{TV4}. We define a cosimplicial object $A_*$ in $\CommC$, $\cC = \ksMod_*$, to be an object of $\CommC^{\Delta}$, which to $[n]$ associates $A_n = (A_{n,ip})$. A cosimplicial $A_*$-module $M_*$ is given by a graded $A_n$-supermodule $M_n$ for all $n \in \Delta$, $M_n = (M_{n,ip})$, and for any morphism $u:[n]\rarr [m]$ in $\Delta$ of a morphism of graded $A_n$-supermodules $\alpha_u: M_n \rarr M_m$ satisfying the usual covariance condition. In the same manner, a morphism of cosimplicial graded $A_*$-supermodules $f:M_* \rarr N_*$ is given by a collection of morphisms $f_n:M_n \rarr N_n$ for all $n\in \Delta$ commuting with the $\alpha$'s. This defines a category $\csAstsMod_*$ of co-simplicial graded $A_*$-supermodules. This is a combinatorial, proper model category where equivalences (resp. fibrations) are morphisms $f: M_* \rarr N_*$ such that each $f_n: M_n \rarr N_n$ is an equivalence (resp. a fibration) in $A_n$-sMod$_*$, a level-wise projective model structure. It will also be convenient sometimes, for $A \in \ksAlg_*$, to regard $B_*$ a co-simplicial commutative graded $A$-superalgebra as a co-simplicial commutative monoid with a co-augmentation morphism $A \rarr B_*$, where we regard $A$ as a constant co-simplicial object. This latter defines a category $\csAsMod_*$ of co-simplicial graded cc($A$)-supermodules, which is really cs($\AsMod_*$), along with its levelwise projective model structure.\\

For a co-simplicial graded $A$-super module $M_*$, we define a co-simplicial graded $B_*$-supermodule $B_* \tbTA M_*$ by $(B_* \tbTA M_*)_n = B_n \tbTA M_n$, with morphisms between different degrees given by the ones on $B_*$ and $M_*$. This gives a functor:
\beq
B_* \tbTA -: \csAsMod_* \rarr \csBstsMod_* \nonumber
\eeq
with a right adjoint:
\beq
\csAsMod_* \leftarrow \csBstsMod_*: f \nonumber
\eeq
sending a graded $B_*$-super module $M_*$ to its underlying graded $A$-super module, clearly a Quillen adjunction. We have an additional adjunction, as in \cite{TV4}:
\beq
\cst : \Ho(\AsMod_*) \rightleftarrows \Ho(\csAsMod_*): \holim = \mathbb{R} \lim \nonumber
\eeq
and by composition:
\beq
B_* \tbTAL - = \mathbb{L}(B_* \tbTA -) \circ \cst: \Ho(\AsMod_*) \rightleftarrows \Ho(\csBstsMod): \holim \circ \bR f = \int \nonumber
\eeq
\begin{htpycart}
Let $B_*$ be a co-simplicial commutative graded super monoid, $M_*$ a co-simplicial graded $B_*$ super module. We say $M_*$ is homotopy cartesian if for all $ u :[n] \rarr [m]$ in $\Delta$, the map induced by $\alpha_u: M_n \rarr M_m$, $M_n \bT_{B_n}^{\mathbb{L}} B_m \rarr M_m$ is an isomorphism in $\Ho(B_m$-sMod$_*$).
\end{htpycart}
Now if $A \in \CommC$, $\cC = \ksMod_*$, $B_*$ co-simplicial commutative algebra over $A$, seen as a co-simplicial commutative graded super monoid, we say the augmentation $A \rarr B_*$ satisfies the descent condition if in the adjunction:
\beq
B_* \tbTAL - : \Ho(\AsMod_*) \rightleftarrows \Ho(\csBstsMod_*): \int \nonumber
\eeq
$B_* \tbTAL -$ is fully faithful and gives an equivalence between $\Ho(\AsMod_*)$ and the full subcategory of $\Ho(\csBstsMod_*)$ spanned by homotopy cartesian objects.\\

\subsection{\'{e}tale model topology}
In this subsection we prove that \'{e}tale covering families define a model topology on $\kDsAff_*$, which further satisfies an assumption that we will refer to as the "Cover" assumption, the bi-graded generalization of Assumption 1.3.2.2 of \cite{TV4}.\\

\subsubsection{Model topology}
Recall that a family of morphisms $\{ \Spec A_i \rarr \Spec A \}_{i \in I}$ in $\kDsAff_*$ is an \'{e}tale covering family if and only if there is a finite subset $J \subset I$ such that for all $i \in I$, $\pi_* A \tilde{\bT}_{\pi_0 A} \pi_0 A_i \xrightarrow{\cong} \pi_* A_i$ and $\coprod_{j \in J} \Spec \pi_0 A_j \rarr \Spec \pi_0 A$ is \'{e}tale and surjective. Recall that being \'etale means finitely presented and formally \'etale, where $A \rarr B$ is formally \'etale if $\bL_A \tbTA B \xrarr{\cong} \bL_B$, and finitely presented means for any familiy $\{C_i \}$, $\hocolim_{i \in I} \Map_{A/M}(B, C_i) \xrarr{\cong} \Map_{A/M}(B, \hocolim C_i)$. Recall also that a family is a covering \'etale family if and only if it is so entry-wise. We will use the fact, proven in \cite{TV4}, that \'{e}tale covering families define a model topology in $\kDAff = (\skCAlg)^{\op}$. We refer the reader to \cite{TV} for the notion of model topology. There are three conditions to be met. Let $M = \kDsAff_*$, let $\Spec A \in M$. Suppose we have an isomorphism $\Spec B \rarr \Spec A$. Entry-wise, this reads $\Spec B_{ip} \rarr \Spec A_{ip}$ isomorphism for $i,p=0,1$ in $\kDAff$, which gives $\{\Spec B_{ip} \rarr \Spec A_{ip} \} \in \covet(\Spec A_{ip})$ for $i,p=0,1$. But since $\covet(X) = (\covet(X_{ip}))$, it follows that condition 1 is met. For the second point, let $\{ \Spec A_i \rarr \Spec A \}_{i \in I} \in \covet(\Spec A)$. This means $\{ \Spec A_{i,kp} \rarr \Spec A_{kp} \}_{i \in I} \in \covet(\Spec A_{kp})$ for $k,p=0,1$. Let $\{ \Spec B_{ij} \rarr \Spec A_i \}_{j \in J} \in \covet(\Spec A_i)$ for $i \in I$, and again that means $\{ \Spec B_{ij,kp} \rarr \Spec A_{i,kp} \}_{j \in J} \in \covet(\Spec A_{i,kp})$ for $k,p =0,1$. Then $\{ \Spec B_{ij,kp} \rarr \Spec A_{kp} \}_{i \in I, j \in J} \in \covet(\Spec A_{kp})$ for $k,p=0,1$, i.e. $\{ \Spec B_{ij} \rarr \Spec A \}_{i \in I, j \in J} \in \covet(\Spec A)$. This is the second condition. Third point, for any $\{ \Spec A_i \rarr \Spec A \}_{i \in I} \in \covet(\Spec A)$, that is $\{ \Spec A_{i,jp} \rarr \Spec A_{jp} \}_{i \in I} \in \covet(\Spec A_{jp})$ for $j,p=0,1$, for any morphism $\Spec B \rarr \Spec A$, which entry-wise reads $\Spec B_{jp} \rarr \Spec A_{jp}$, then $\{ \Spec A_{i,jp} \times_{\Spec A_{jp}}^h \Spec B_{jp} \rarr \Spec B_{jp} \} \in \covet(\Spec B_{jp})$ for $j,p=0,1$, which recombine into $\{ \Spec A_i \times_{\Spec A}^h \Spec B \rarr \Spec B \} \in \covet(\Spec B)$. Hence \'{e}tale covering families define a model topology on $\kDsAff_*$.\\

Now part of showing the cover assumption is satisfied hinges on the fact that $(\sAffC)_*$ is a simplicial model category on which we can put a Reedy model structure.

\subsubsection{ Reedy model structure on $\sAffC$ }
$\sAffC = \AffC^{\Dop}$ with $\AffC = \kDsAff_* = (\sksAlg_*)^{\op}$, is a simplicial category with $\cC = \sksMod_*$, thus it is cotensored over $\SetD$ so for $X_* \in \sAffC$, $K \in \SetD$, $X_*^K$ is well defined, and its zeroth part will just be denoted $X^K$. We have in particular:
\beq
X_*^K = (\oP_{i,p}X_{*,ip})^K = \oP_{i,p}X_{*,ip}^K  \nonumber
\eeq
Indeed, for $X_*,Y_* \in \sAffC$ and $K \in \SetD$:
\begin{align}
	\Hom(Y_* \oT K, X_*) &= \Hom((\oP_{i,p}Y_{*,ip}) \oT K, (\oP_{i,p}X_{*,ip})) \nonumber \\
	&= \oP_{i,p} \Hom_{\sAffuC}(Y_{*,ip} \oT K, X_{*,ip}) \nonumber \\
	&\cong \oP_{i,p} \Hom(Y_{*,ip}, X_{*,ip}^K) \nonumber 
\end{align}
since $\sAffuC$ is a simplicial category. This further recombines into:
\beq
\Hom(Y_*, \oP_{i,p}X_{*,ip}^K) = \Hom(Y_*,X_*^K) \nonumber 
\eeq
We also have:
\begin{align}
	X^K &= (\oP_{i,p} X_{*,ip}^K)_0 \nonumber \\
	&=\oP_{i,p}(X_{*,ip}^K)_0 \nonumber \\
	&= \oP_{i,p} X_{ip}^K \nonumber \\
	&= (X_{ip}^K) \nonumber
\end{align}
Thus the simplicial structure on $\sAffC$ is an entry-wise generalization of that on $\sAffuC$. One could also show that it is a bi-graded simplicial category and this is proved formally in exactly the same manner that we showed $\sksMod_*$ is a bi-graded simplicial category. Then for the bi-graded simplicial structure, we have $X_*^K = \oP_{i,p,q} K_{*,ip}^{K_{iq}}$. If $K = \Dn \times \Dn$, with $\Dn$ regarded as being of even parity, we then have:
\beq
K_*^{\Delta \Delta^n} = \oP_{i,p} K_{*,ip}^{\Delta^n} = (K_*)^{\Dn} \nonumber
\eeq
for the simplicial structure. For $Y \in \AffC \hookrightarrow \sAffC$ regarded as a constant simplicial object via the constant simplicial functor cs$_*(Y)$, with cs$_*(Y)_n = Y$ for any $n$, when we write $Y^K$ we really mean $(\text{cs}_*(Y))^K$.\\

We now put a Reedy structure on $\sAffC$. Equivalences  $X_* \rarr Y_*$ are such that $X_n \rarr Y_n$ are equivalences in $\AffC$ for any $n$, and fibrations $X_* \rarr Y_*$ are those maps such that for any $n$:
\beq
\xymatrix{
X^{\Dn} \cong X_n \ar@{=}[d] \ar[r] & X^{\pDn} \times_{Y^{\pDn}} Y^{\Dn} \ar@{=}[d] \nonumber\\
(X_{ip}^{\Dn}) & (X_{ip}^{\pDn} \times_{Y_{ip}^{\pDn}} Y_{ip}^{\Dn})   \nonumber 
}
\eeq
is a fibration in $\AffC$. Entry-wise this decomposes into $X_{ip}^{\Dn} \rarr X_{ip}^{\pDn}\times_{Y_{ip}^{\pDn}}Y_{ip}^{\Dn}$, fibration in $\AffuC$ for $i,p=0,1$, i.e. the Reedy model structure on $\sAffC$ is an entry-wise Reedy model structure on $\sAffuC$. Observe that for $K \in \SetD$, the functor:
\begin{align}
\sAffC & \rarr \AffC \nonumber \\
X_* & \mapsto (X_*^K)_0 = X^K \nonumber
\end{align}
is a right Quillen functor for the Reedy model structure on $\sAffC$. 

\newpage
Indeed it decomposes into $X_* = (X_{*,ip}) \mapsto (X_{ip}^K)$, right Quillen in the Reedy model structure on $\sAffuC$ entry-wise, recombining into a right Quillen functor in $\sAffC$. Its right derived functor is given by:
\begin{align}
\Ho(\sAffC) &\rarr \Ho(\AffC) \nonumber \\
X_* & \mapsto \mathbb{R}X^k \nonumber \\
	&= \bR(X_{ip}^K) \nonumber \\
	&= (\bR X_{ip}^K)\nonumber \\
	&= (X_{ip}^{\bR K})\nonumber \\
&=X^{\mathbb{R}K} \nonumber
\end{align}

\subsubsection{ The Cover Assumption}
This assumption, which mirrors Assumption 1.3.2.2 of \cite{TV4}, is just the bi-graded version of the latter, so the reader is referred to that reference for the full statement. Essentially we assume the \'{e}tale topology on $\Ho(\kDsAff_*)$ is quasi-compact,
projections onto coproducts form \'{e}tale-covering families, both of which are fairly immediate to verify, and thirdly if $X_* \rarr Y$ is an augmented simplicial object in $\AffC$ corresponding to a co-augmented co-simplicial object $A \rarr B_*$ in $\CommC$, if we assume that for all $n$ the morphism:
\beq
X_n \rarr X^{\bR \pDn} \times_{Y^{\bR \pDn}}^h Y \label{etcovmap}
\eeq
by itself forms an \'{e}tale covering family in $\AffC$, then $A \rarr B_*$ satisfies descent. We discuss this third point.\\

We have:
\beq
 X^{\bR \pDn} \times_{Y^{\bR \pDn}}^h Y = (X_{ip}^{\bR \pDn} \times^h_{Y_{ip}^{\bR \pDn}} Y_{ip}) \nonumber
\eeq
with this the map \eqref{etcovmap} reads entry-wise:
\beq
X_{n,ip} \rarr  X_{ip}^{\RpDn} \times_{Y_{ip}^{\RpDn}}^h Y_{ip} \nonumber
\eeq
for $i,p=0,1$. Now since \'{e}tale covering families are so entry-wise, those maps correspond to \'etale covering families in $\AffuC$, so the maps $A_{ip} \rarr B_{*,ip}$ satisfy descent by the original ungraded cover assumption of \cite{TV4}. 

\newpage
We show this implies they can be recombined into $A \rarr B_*$ that also satisfies descent. This will imply our bi-graded cover assumption is just a diagonalization of the original cover assumption of \cite{TV4}.\\

Recall that to satisfy descent means
\beq
B_* \tbTAL -: \Ho(\AsMod_*) \rarr \Ho(\csBstsMod_*) \nonumber 
\eeq
is fully faithful and establishes an equivalence of $\Ho(\AsMod_*)$ with the subcategory of homotopy cartesian objects in $\Ho(\csBstsMod_*)$. That the entries $A_{ip} \rarr B_{*,ip}$ satisfy descent then means that  $B_{*,ip} \oT_{A_{ip}}^{\mathbb{L}}-: \Ho(A_{ip} \dashMod) \xrightarrow{ff} \Ho(\text{cs}B_{*,ip}\text{-Mod})$ for $i,p=0,1$. Those maps recombine into $B_* \tbTAL - : \Ho(\AsMod_*) \rarr \Ho(\csBstsMod_*)$ fully faithful. Regarding homotopy cartesian objects, those are objects $M_*$ in $\Ho(\csBstMod_*)$ such that $M_n \tbT^{\bL}_{B_n} B_m \rarr M_m$ is an isomorphism in $\Ho(B_m\text{-sMod}_*)$. We quickly check $B_* \tbTAL -$ maps objects of $\Ho(\AsMod_*)$ into cartesian objects of $\Ho(\csBstsMod_*)$: let $C_* \in \Ho(\AsMod_*)$. We know from \cite{TV4} in the ungraded case that entry-wise:
\beq
(B_{n,ip} \oT_{A_{ip}}^{\mathbb{L}} C_{n,ip}) \oT_{B_{n,ip}}^{\mathbb{L}} B_{m,ip} \cong B_{m,ip} \oT_{A_{ip}}^{\mathbb{L}} C_{m,ip} \nonumber
\eeq
for $i,p=0,1$, since $B_* \oTAL -$ maps objects of $\Ho(\AMod)$ into homotopy cartesian objects of $\Ho(\csBstMod)$. Now we have:
\begin{align}
	(B_n \tbTAL C_n) \tbT^{\bL}_{B_n} B_m &= \oP_{i,p} (B_n \tbTAL C_n)_{ip} \oT_{B_{n,ip}}^{\mathbb{L}} B_{m,ip} \nonumber \\
	& = \oP_{i,p} (B_{n,ip} \oT_{A_{ip}}^{\mathbb{L}} C_{n,ip}) \oT_{B_{n,ip}}^{\mathbb{L}} B_{m,ip} \nonumber \\
	&\cong \oP_{i,p} B_{m,ip} \oT_{A_{ip}}^{\mathbb{L}} C_{m,ip} = B_m \tbTAL C_m \nonumber
\end{align}
hence $B_* \tbTAL -$ maps objects of $\Ho(\AsMod_*)$ into homotopy cartesian objects. The essential surjectivity is dealt with entry-wise. Thus the third point is satisfied.\\

To conclude this subsection on the model topology, the \'{e}tale topology does define a model topology on $\kDsAff_*$ and makes $(\kDsAff_*, \acute{e}t.)$ into a model site. Further this topology satisfies our technical cover assumption, bi-graded generalization of the cover assumption as originally introduced in \cite{TV4}.\\

\subsection{Bi-graded simplicial presheaves}
Let $M$ be a small bi-graded model category (the choice of universes $\mathbb{U} \in \mathbb{V}$ being implied, $M$ $\mathbb{V}$-small, $\mathbb{U}$-cofibrantly generated...), $W(M)$ its class of equivalences. For us $M = \kDsAff_*$. Define:
\beq
\sPrZtwosqu(M) = \Sstdot^{\Mop} \nonumber
\eeq
the category of bi-graded simplicial presheaves on $M = (M_{ip})$. Equivalently:
\beq
\sPrZtwosqu(M) = \oP_{i,p} \sPr(\uM) \nonumber
\eeq
We put the entry-wise projective model structure on $\sPrZtwosqu(M)$ where equivalences and fibrations are defined object-wise. \\

Our aim at this point is to define a model category of prestacks $\Mhat$, obtained as a bi-graded left Bousfield localization of $\sPrZtwosqu(M)$ along $\{h_u \, | \, u \in W(M) \}$ where $h:M \rarr \PrZtwo(M) \hookrightarrow \sPrZtwosqu(M)$ is the bi-graded Yoneda embedding. In other terms we localize each individual $\sPr(\uM)$, i.e. we do an entry-wise Bousfield localization. Then $\Ho(\Mhat)$ is identified with the subcategory of $\Ho(\sPrZtwosqu(M))$ that consists of presheaves that preserve weak equivalences entry-wise. $\Mhat$ is a bi-graded simplicial model category since $\sPr(\uM)^{\wedge} = \sPr(\kDAff)^{\wedge}$ is a simplicial model category from \cite{TV}, and we denote by $\RuHom \in \Ho(\Sstdot)$ its derived hom. From there we define a model category $\Mtet$ of stacks on $(M, \acute{e}t.)$, left bi-graded Bousfield localization of $\Mhat$ along homotopy \'{e}tale-hypercovers. We obtain a pair of adjoint Quillen functors:
\beq
id: \Mhat \rightleftarrows \Mtet: id \nonumber
\eeq
giving rise to:
\beq
a = \mathbb{L}id: \Ho(\Mhat) \rightleftarrows \Ho(\Mtet): \mathbb{R}id = j \nonumber
\eeq
we finally define a bi-graded stack on $(\kDsAff_*, \tet)$ to be an object of $\sPrZtwosqu(M)$ whose image in $\Ho(\Mhat)$ is in the essential image of the functor $j$, exactly in the same way that derived stacks were defined in \cite{TV4}.\\

\newpage
\subsection{Bi-graded left Bousfield localization}
The reference we use for Bousfield localizations is \cite{Hi}, which we follow closely since it is just a matter of adapting Hirschhorn's definition to the bi-graded case. Let $M = (M_{ip})$ be a $\bZ_2$-graded model category, such as $\kDsAff_*$. Let $\cC$ be a class of maps in $M$, $\cC = (\cC_{ip})$. An object $W$ of $M$ is $\cC$-local if it is fibrant and for any $f: A \rarr B$ in $\cC$, or entry-wise $f_{ip}: A_{ip} \rarr B_{ip}$ in $\ucC$ for $i,p=0,1$, the induced map of mapping spaces is a weak equivalence:
\beq
f^* : \Map(B,W) \xrightarrow{\simeq} \Map(A,W) \nonumber
\eeq
or entry-wise:
\beq
f_{ip}^*: \Map(B_{ip},W_{ip}) \xrightarrow{\simeq} \Map(A_{ip},W_{ip}) \nonumber
\eeq
In other terms, $\cC$-local objects are entry-wise $\ucC$-local objects. We define a map $g: X \rarr Y$ in $M$ to be a $\cC$-local equivalence if for any $\cC$-local object $W$, the induced map of mapping spaces:
\beq
g^*: \Map(Y,W) \rarr \Map(X,W) \nonumber
\eeq
is a weak equivalence, or equivalently entry-wise:
\beq
g_{ip}^*: \Map(Y_{ip},W_{ip}) \xrightarrow{\simeq} \Map(X_{ip},W_{ip}) \nonumber
\eeq
for $i,p=0,1$, so again $\cC$-local equivalences are entry-wise $\ucC$-local equivalences. \\

Finally we define the bi-graded left Bousfield localization of $M$ with respect to $\cC$ to be a model category structure $\LCM$ on the underlying bi-graded category of $M$ such that the class of weak equivalences is the class of $\cC$-local equivalences, cofibrations are those of $M$, and fibrations have the right lifting property with respect to those cofibrations that are also $\cC$-local equivalences. If $A \rarr B$ is such a cofibration, $X \rarr Y$ a fibration, a diagram such as:
\beq
\xymatrix{
A \ar[d] \ar[r] & X \ar[d]\\
B \ar@{.>}[ur] \ar[r] &Y} \nonumber
\eeq
breaks up into diagrams for $i,p=0,1$:
\beq
\xymatrix{
A_{ip} \ar[d] \ar[r] & X_{ip} \ar[d]\\
B_{ip} \ar@{.>}[ur] \ar[r] &Y_{ip}} \nonumber
\eeq
with $A_{ip} \rarr B_{ip}$ a cofibration that's also a $\ucC$-local equivalence. This shows fibrations are also entry-wise fibrations, hence:
\beq
\LCM = (L_{\cC_{ip}} M_{ip})\nonumber
\eeq

\subsection{Model category of prestacks $\Mhat$}
We first define restricted diagrams, following \cite{TV}. For $M$ a small category (relative to $\mathbb{V}$), such as $\kDsAff_*$, given $\Sstdot$, bi-graded simplicial model category, cofibrantly generated, we consider $\Sstdot^M$ the category of bi-graded simplicial functors $M \rarr \Sstdot$ with its entry-wise projective model structure. For any $x \in M$ we have an induced map:
\begin{align}
\iota_x^*: & \Sstdot^M \rarr \Sstdot \nonumber \\
&F \mapsto F(x) \nonumber
\end{align}
with a left adjoint $(\iota_x)_!: \Sstdot \rarr \Sstdot^M$ that's a left Quillen functor. Let $I$ be a set of generating cofibrations in $\Sstdot$, $f: A \rarr B$ any morphism in $I$, $u: x \rarr y$ any morphism in $M$. Consider the natural morphism in $\Sstdot^M$:
\beq
f \Box u: (\iota_y)_! A\coprod_{(\iota_x)_! A} (\iota_x)_! B \rarr (\iota_y)_! B \nonumber
\eeq
with $(\iota_x)_{!jp}: S_{jp} \rarr S_{jp}^{M_{jp}}$, left adjoint to
\beq
(\iota_x^*)_{jp}: F_{jp} \mapsto (F(x))_{jp} = F_{jp}(x_{jp}) \nonumber
\eeq
that is, $(\iota_x^*)_{jp} = \iota_{x_{jp}}^*$, so that $(\iota_x)_{!jp} = (\iota_{x_{jp}})_!$. Thus:
\beq
(\iota_y)_!A = ((\iota_{y_{jp}})_{!} A_{jp}) \nonumber
\eeq
which means $f \Box u = (f_{jp} \Box u_{jp})$, so that entry-wise:
\beq
f_{lp} \Box u_{lp}: (\iota_{y_{lp}})_!A_{lp} \coprod_{(\iota_{x_{lp}})_!A_{lp}}(\iota_{x_{lp}})_!B_{lp} \rarr (\iota_{y_{lp}})_!B_{lp} \nonumber
\eeq
\begin{resdiagr}
The model category of restricted diagrams from $M$ to $\Sstdot$, denoted $M^{\op,\wedge}$, is defined to be the left Bousfield localization of $\Sstdot^M$ along the set of morphisms of the form $f \Box u$, for $f \in I$, and $u$ a weak equivalence in $M$.
\end{resdiagr}
If we denote by $W$ the set of weak equivalences, this gives:
\beq
L_{I \Box W} \Sstdot^M = (L_{I_{jp} \Box W_{jp}} S_{jp}^{M_{jp}}) \nonumber
\eeq
or in other terms:
\beq
M^{\op,\wedge} = (M_{jp}^{\op, \wedge}) \nonumber
\eeq
With this notion, we define $\Mhat = L_{I \Box W}\Sstdot^{\Mop}$ to be the model category of prestacks on $M$.

\subsection{Hyperdescent}
There is a slew of pseudo-representable objects we have to define before we get to the definition of hyperdescent. Those objects were initially defined in \cite{TV4} in the ungraded case, and we briefly reproduce their bi-graded definitions here for convenience's sake. \\

We say $F \in \Mhat$ is pseudo-representable if it is a small disjoint union of representable presheaves:
\beq
F \simeq \coprod_{u \in I}h_u = ( \coprod_{u_{ip} \in I_{ip}} h_{u_{ip}}) = ( F_{ip}) \nonumber
\eeq
that is it is entry-wise pseudo representable.\\

A pseudo-fibration is a morphism of pseudo-representable objects represented by a fibration in $M$. This breaks up into:
\begin{align}
	\oP_{l,p=0,1} \prod_{u_{lp} \in I_{lp}} \coprod_{v_{lp} \in J_{lp}} \Hom(h_{u_{lp}}, h_{v_{lp}}) \rarr \oP_{l,p=0,1} &\coprod_{v_{lp} \in J_{lp}} \Hom(h_{u_{lp}}, h_{v_{lp}}) \nonumber \\
	&\simeq \oP_{l,p} \coprod_{v_{lp} \in J_{lp}} \Hom_{M_{lp}}(u_{lp},v_{lp}) \nonumber
\end{align}
with entries that are pseudo-fibrations, so pseudo-fibrations are so entry-wise in $\uM$.\\

To avoid repetition, the reader is referred to \cite{TV} for the definition of pseudo-covering. It is clearly entry-wise a pseudo-covering.\\

For pseudo-representable hypercovers, let $x$ be a fibrant object of $M$. Then a pseudo-representable hypercover is an object $F_* \rarr h_x$ in $s\Mhat/h_x$ such that for any $n \geq 0$, the induced morphism:
\beq
F_n \rarr F^{\pDn} \times_{h_x^{\Dn}} h_x^{\Dn} \nonumber
\eeq
is a pseudo-fibration and pseudo-covering of pseudo-representable objects. We write $F_* = ( F_{*,ip})$ with $F_{*,ip} \in s\uMhat/h_{x_{ip}}$ with $x = (x_{ip})$ in $M$, so that:
\beq
\xymatrix{
	\oP_{i,p} F_{n,ip} \ar@{.>}[dr] \ar[r] &\Big( \oP_{i,p} F_{ip}^{\pDn}\Big) \times_{( \oP_{i,p} h_{x_{ip}}^{\pDn})} \Big( \oP_{i,p} h_{x_{ip}}^{\Dn} \Big) \ar@{=}[d] \\
&\oP_{i,p} F_{ip}^{\pDn} \times_{h_{x_{ip}}^{\pDn}} h_{x_{ip}}^{\Dn} } \nonumber
\eeq
showing that pseudo-representable hypercovers are defined entry-wise.\\

Note that $\Mhat = L_{I \Box W} \Sstdot^{\Mop}$ is naturally tensored and cotensored over $\Sstdot$ with external products and exponentials being defined objectwise. This makes $\Mhat$ into a bi-graded simplicial model category with bi-graded simplicial hom $\uHom$ and derived bi-graded simplicial hom that we will denote as in \cite{TV4} $\RomuHom$.\\

Regarding Yoneda, fix a cofibrant resolution functor $(\Gamma: M \rarr M^{\Delta}, \iota)$ with $\Gamma = (\Gamma_{ip})$ and $\iota = (\iota_{ip})$, that is for all $x \in M$, $\Gamma(x) = (\Gamma_{ip}(x_{ip}))$ is a co-simplicial object in $M$, cofibrant for the Reedy model structure on $M^{\Delta}$, together with a natural equivalence $\iota(x) : \Gamma(x) \rarr c^*(x)$, bi-graded constant co-simplicial object in $M$ at $x$. We have:
\begin{align}
	\uh= (\uh_{ip}): &M \rarr \sPrZtwosqu(M) \nonumber \\
	\uh_x = (\uh_{x_{ip}}): &\Mop \rarr \Sstdot \nonumber \\
	&y \mapsto \Hom_M(\Gamma(y),x)  \nonumber 
\end{align}
since $\uh_{ip}$ preserves fibrant objects and equivalences between them, so does $\uh$, so $\Ruh: \Ho(M) \rarr \Ho(\Mhat)$ is well-defined. Further:
\beq
\RomuHom_{\uMhat}(\uh_{x_{ip}}, F_{ip}) \cong F_{ip}(x_{ip}) \nonumber
\eeq
for $i,p=0,1$, leading to:
\beq
\RomuHom_{\Mhat}(\uh_x, F) \cong F(x) \nonumber
\eeq
This follows from the classical enriched Yoneda isomorphism $\RomuHom(h_x,F) \cong F(x)$, and the fact that $\Ho(h) \cong \bR \uh$ in $\Ho(\Mhat)$ entry-wise (\cite{TV}).\\

Finally, $F \in \Mhat$ is said to have hyperdescent if for any fibrant object $x \in M$, for any pseudo-representable hypercover $H_* \rarr h_x$, with realization $|H_*| = (|H_{*,ip}|)$, the induced morphism:
\beq
F(x) \simeq \RomuHom(h_x, F) \rarr \RomuHom(|\Hd|, F) \nonumber
\eeq
is an isomorphism in $\Ho(\Sstdot)$. A stack on $(M, \tet)$ is a prestack $F \in \Mhat$ that satisfies $\tet$-hyperdescent. We denote by $\Mtet$ the model category of stacks on $M$.
Another way to say this is by considering $H_{\beta}(x)$ for $x$ fibrant in $M$ the set of representatives of the set of isomorphism classes of objects $F_* \rarr h_x$ in $s\Mhat/h_x$ (see \cite{TV4} for details), those morphisms that are pseudo-representable hypercovers with a bound on the cardinality of each $F_n$. We have:
\beq
H_{\beta}(x) = (H_{\beta_{ip}}(x_{ip})) \nonumber
\eeq
Then we can equivalently say $\Mtet$ is the left Bousfield localization of $\Mhat$ with respect to morphisms in $H_{\beta}$.

\subsection{local equivalences}
Another way to define stacks is as follows. First consider the presheaf of connected components of $RF$:
\begin{align}
	\pizeropr(F) : & \Mop \rarr \Set_{* \cDot} \nonumber \\
	&x \mapsto \pi_0(RF(x)) = (\pi_0(R_{ip}F_{ip}(x_{ip}))) \nonumber
\end{align}
leading to:
\begin{align}
	\pizeroeq: & \Mhat \rarr \text{Pr}_{\bZ_2^2}(M) \nonumber \\
&F \mapsto \pizeropr(RF) \nonumber
\end{align}
factoring as:
\begin{align}
	\pizeroeq: & \Mhat \rarr \text{Pr}_{\bZ_2^2}(\Ho(M)) \nonumber \\
&F \mapsto \pizeroeq(F) \nonumber
\end{align}
which further factors as $\pizeroeq: \Ho(\Mhat) \rarr \Pr_{\bZ_2^2}(\Ho(M))$. We also have an evaluation functor:
\begin{align}
	j_x^*=( j_{x_{ip}}^*): &\Mhat \rarr (M/x)^{\wedge} \nonumber \\
	&F \mapsto F(x) = (F_{ip}(x_{ip})) \nonumber
\end{align}
with a left adjoint:
\beq
j_{x!} = ( j_{x_{ip}!}) :  (M/x)^{\wedge} = ( (\uM/x_{ip})^{\wedge} )\rarr \Mhat \nonumber
\eeq
so once right derived:
\beq
\Rjxstar: \Ho(\Mhat) \rarr \Ho((M/x)^{\wedge}) \nonumber
\eeq
Now for $F \in \Mhat$, $x \in M$ fibrant, $s \in \pizeroeq(F)(x)$ can be represented by a morphism $s: h_x \rarr F$ in $\Ho(\Mhat)$, giving by pullback $\Rjxstar s: \Rjxstar(h_x) \rarr \Rjxstar F$. Since we have a point $* \rarr \Rjxstar h_x$, by composition we get a global point $* \rarr \Rjxstar F$. All of this can be found in \cite{TV}. We now define, much as in this reference, for any $n>0$, the sheaf $\pi_n(F,s)$, by:
\beq
\pi_n(F,s) = \pi_0 \Big( \Rjxstar (F)^{\RDn} \times^h_{\Rjxstar(F)^{\RpDn}}* \Big) \nonumber
\eeq
this decomposes as follows:
\begin{align}
	\pi_n(F,s) &= \pi_0 \Bigg( \Big( \oP_{i,p} \Rjxipstar (F_{ip})^{\RDn}\Big) \times^h_{\oP_{i,p} \Rjxipstar(F_{ip})^{\RpDn}} \oP_{i,p} *_{ip} \Bigg) \nonumber \\
	&=\pi_0( \oP_{i,p} \Rjxipstar (F_{ip})^{\RDn} \times^h_{ \Rjxipstar(F_{ip})^{\RpDn}} *_{ip} ) \nonumber \\
	&= \oP_{i,p} \pi_0 (\Rjxipstar (F_{ip})^{\RDn} \times^h_{ \Rjxipstar(F_{ip})^{\RpDn}} *_{ip}) \nonumber \\
	&=\oP_{i,p} \pi_n(F_{ip},s_{ip}) \nonumber
\end{align}
where $s \in \pizeroeq(F)(x) = (\pizeroeq(F_{ip})(x_{ip}))$ so we write $s = (s_{ip})$.\\

We now define local equivalences exactly as in \cite{TV}. Those are also referred to as $\pi_*$-equivalences. A morphism $f: F \rarr G$ in $\Mhat$ is such an equivalence if the induced morphism $\pi_0(F) \rarr \pi_0(G)$ is an isomorphism on $\Ho(M)$, i.e. if it is entry-wise so, and additionally if for any fibrant $x$ in $M$, any $s \in \pizeroeq(F)(x)$, and $n > 0$, we have a bijection of sheaves on $\Ho(M/x)$: $\pi_n(F,s) \rarr \pi_n(G, f(s))$, and those are entry-wise isomorphisms as well. In other terms local equivalences are entry-wise local equivalences.\\

Finally given $M$, and for us this is really $\kDsAff_*$ we have in mind with the \'{e}tale topology, there is a closed model structure on $\sPrZtwosqu(M)$ called the entry-wise local projective model structure for which equivalences are local equivalences, cofibrations are cofibrations in $\Mhat$ for its projective model structure. It turns out that $\sPrZtwosqu(M)$ with this model structure is $\Mtet$. The proof of the existence of such a model structure is done entry-wise, and at that level this is proved in \cite{TV}. Now as pointed out above, we can also construct $\Mtet$ as the left Bousfield localization of $\Mhat$ with respect to $H = \{ |F_*| \rarr h_x \, | \, x \in M^f, F_* \in H_{\beta}(x) \}$. Maps in $H$ break down as $ |F_{*,ip}| \rarr h_{x_{ip}}$ so that it's clear we have $H = (H_{ip})$, hence:
\begin{align}
\Mtet &= L_H \Mhat \nonumber \\
	&= (L_{H_{ip}}M_{ip}^{\wedge}) \nonumber \\
	&=(M_{ip}^{\sim \, , \, \tet}) \nonumber 
\end{align}

\subsection{Truncations}
Define $X \in \Sstdot$ to be $n$-truncated if it is entry-wise $n$-truncated. $x \in \Ho(M_{*\cDot})$ is said to be $n$-truncated if it is so entry-wise. Then define $\pi_{\leq n}$-equivalences as in \cite{TV}. There is a model structure on $\sPrZtwosqu(M)$ called the $n$-truncated bi-graded local projective model structure for which equivalences are bi-graded $\pi_{\leq n}$-equivalences, cofibrations are cofibrations for the bi-graded projective model structure on $\Mhat$. That we have such a model structure follows directly for the same result in the ungraded case proved in \cite{TV}, since we can work entry-wise. This model category is denoted $M_{\leq n }^{\sim \, , \, \tet}$, and can be seen as a substitute for working with compactified theories in Theoretical Physics. We also have as a Corollary that this model category can be obtained as the left Bousfield localization of $\Mtet$ with respect to $\{  \partial \Delta^i \oT \uh_{x} \rarr  \Delta^i \oT \uh_x, i>n, x \in M \}$.

\newpage
\section{Supersymmetric stacks}
In what follows we will make use of the strictification theorem, which can be found in \cite{HS}, \cite{TV} and \cite{T}: for $C$ a category with a subcategory $S$ of morphisms, $M$ a cofibrantly generated model category, $M^{C,S}$ the localization of $M^C$ with respect to restricted diagrams, then:
\beq
L(M^{C,S}) \simeq \RuHom(L(C,S), LM) \nonumber
\eeq
where $L$ stands for Segal localization. In particular if $T$ is an $S$-site (\cite{TV}), then we just have $L(M^T) \cong \RuHom(T, LM)$. If $M = \Sstdot$, with $L(\Sstdot) = \Topstdot$, we have $L(\Sstdot^T) \cong \RuHom(T, \Topstdot)$. For us $T = L(\sksAlg_*)^{\op} = L(\kDsAff_*)$ will be denoted $\dksAff_*$. We define by:
\beq
\hatdksAff_* = \RuHom( \dksAff^{\op}_*, \Topstdot) \nonumber
\eeq
the Segal localization of $\sPrZtwosqu(\kDsAff_*)$. Then we define the category of stacks $\dksAfftet_*$ as a left exact localization of $\hatdksAff_*$, which can equivalently be obtained as the Segal localization of $\kDsAfftet_*$, itself the left Bousfield localization of $\sPrZtwosqu(\kDsAff_*)$, regarded as a simplicial category for strictication, and then as a bi-graded simplicial category.. Diagrammatically:
\beq
\xymatrix{
\kDsAff_*^{\wedge} \ar[d]_-{L_{\text{Bous}}} \ar[r]^-L & \RuHom(L(\kDsAff_*)^{\op}, \Topstdot ) = \hatdksAff_*  \ar[d]^{\text{LexLoc}} \nonumber\\
\kDsAfftet_* \ar[r]_{L} &\dksAfftet_* }
\eeq

The stacks we have defined are objects of $\kDsAfftet_*$, and after Segal localization, they become objects of the category $\dksAfftet_*$, where they become functors: $F:L(\sksAlg_*) \rarr \Topstdot$. Now we argue for every $A \in \sksAlg_*$, $\cX \in \Topstdot$ determined by constraint equations such as equations of motion, $\Psi:A \rarr \cX$ a map satisfying those equations, define $F(A) = \{\Psi(\sigma_i, \theta_i) \, | \, \sigma_i \in A_0, \theta_i \in A_1 , i = 0,1\}$ with the induced topology. Suppose $F$ thus defined is a stack. If in addition we have a notion of supersymmetric transformation on each $A \in \sksAlg_*$ under which $\Psi$ is well-behaved, then the resulting functor $F$ is called a supersymmetric stack. \\

In a first time we go over simple supersymmetry transformations using a standard reference such as \cite{GSW}. This will motivate our modifications, which will come afterwards.\\

\subsection{Ordinary Supersymmetry}
Everything in this section is from \cite{GSW}, and the reader is referred to that reference for more details. Suppose we have, for $M \in \sksMod$, two elements $\sigma^{\alpha}$ of $M_0 \in \skMod$, $\alpha = 1,2$, and two elements $\theta^A$ of $M_1$, $A = 1,2$. Those are seen as vector components. We also introduce matrices:
\beq
\rho^0 = \left(
           \begin{array}{cc}
             0 & -i \\
             i & 0 \\
           \end{array}
         \right) \nonumber
\eeq
and
\beq
\rho^1 = \left(
           \begin{array}{cc}
             0 & i \\
             i & 0 \\
           \end{array}
         \right) \nonumber
\eeq
We write $\bar{\theta} = \theta^T \rho^0$. Suppose elements of $M$ are variables. We can define the following derivatives:
\beq
Q_A = \frac{\partial }{\partial \bar{\theta}^A} + i(\rho^{\alpha} \theta)_A \partial_{\alpha} \nonumber
\eeq
defining what is called a supersymmetry transformation:
\begin{align}
\delta \theta^A &= [\bar{\epsilon}Q, \theta^A ] = \epsilon^A \nonumber \\
\delta \sigma^{\alpha} &=[\bar{\epsilon}Q, \sigma^{\alpha}] = i \bar{\eps} \rho^{\alpha} \theta \nonumber
\end{align}
where $\eps$ is an odd parameter. If one defines a field on $A$ valued in $\Top$, with $\mu$ being a dimensional index:
\beq
Y^{\mu}(\sigma, \theta) = X^{\mu}(\sigma) + \bar{\theta}\psi^{\mu}(\sigma) + \frac{1}{2} \bar{\theta}\theta B^{\mu}(\sigma)\nonumber
\eeq
then defining an ansatz for the supersymmetry transformation of $Y$ under $\delta$ as in:
\beq
\delta Y^{\mu}(\sigma, \theta) = \delta X^{\mu}(\sigma) + \bar{\theta}\delta \psi^{\mu}(\sigma) + \frac{1}{2} \bar{\theta}\theta \delta B^{\mu}(\sigma)\nonumber
\eeq
one finds:
\begin{align}
\delta X^{\mu} &= \bar{\eps}\psi^{\mu}\nonumber \\
\delta \psi^{\mu} &= -i \rho^{\alpha} \eps \partial_{\alpha} X^{\mu} + B^{\mu} \eps \nonumber \\
\delta B^{\mu} &= -i \bar{\eps}\rho^{\alpha} \partial_{\alpha} \psi^{\mu} \nonumber
\end{align}

\subsection{Supersymmetric maps}
For our purposes we need to have $Y$ valued in a bi-graded topological space. Note that in the definition of $Y^{\mu}$ in the preceding section, there is no odd part. Additionally, the supersymmetry transformation of $\theta$ produces $\eps$, another anticommuting variable, though if we want symmetry in our formalism, we would like to obtain an even element. Finally the supersymmetry transformations on $\sigma$ and $\theta$ are not symmetric themselves, so this is something we might want to enforce. Finally $\delta$ on each of $X^{\mu}$, $\psi^{\mu}$ and $B^{\mu}$ in the previous section is defined via an ansatz, so those transformations are really specific to each field and do not correspond to a single algebraic operation $\delta$, something we would like to have for a more functorial treatment of supersymmetry transformations.\\

We introduce $\cX = (\cX_{ip}) \in \Topstdot$, $\Psi: M \rarr \cX$, $M \in \ksMod_*$, with entries $\Psi_{ip}: M_{ip} \rarr \cX_{ip}$ with $i,p=0,1$. We have seen that $\ksMod_*$ has an internal Hom, its odd parts being made of morphisms in $\ksMod_*$ that change parity. Let $\delta$ be one such morphism.\\ 

We consider those $\delta$'s that provide supersymmetric transformations in the sense that they are elements of $\ucHom(M,M)_1$, and their definition displays some symmetry. For that purpose, we are led to introducing even parameters $\eps$ and $\lambda$, and matrices $\rho$ and $\gamma$, such that
\begin{align}
\delta_0 \sigma^{\alpha} &= \eps^T \rho^{\alpha} \theta \nonumber \\
\delta_1 \theta^A &=\lambda^T \gamma^A \sigma \nonumber
\end{align}
with $\alpha, A = 0,1$. Thus $\delta_0: M_0 \rarr M_1$ and $\delta_1: M_1 \rarr M_0$. The induced transformation on $\Psi$ is given by:
\begin{align}
\delta_0^* \Psi_1: &M_0 \rarr \cX_1 \nonumber \\
\delta_1^* \Psi_0: &M_1 \rarr \cX_0 \nonumber
\end{align}
or $\delta = \delta_0 \oP \delta_1 \in \ucHom(M,M)_1$. Thus:
\begin{align}
\delta^* \Psi(\sigma, \theta) &= \Psi(\delta(\sigma, \theta)) \nonumber \\
&= \Psi(\delta_0 \sigma \oP \delta_1 \theta) \nonumber \\
&= \Psi_0(\delta_1 \theta) \oP \Psi_1(\delta_0 \sigma)      ) \nonumber \\
&= \delta_1^*\Psi_0 \oP \delta_0^* \Psi_1 \nonumber
\end{align}
that is $\delta^*  = \delta_0^* \oP \delta_1^*$. We now generalize this construction to the case $M \in \sksMod_*$. This means we consider objects $\sigma_n$, $\theta_n$, $\gamma_n$, $\rho_n$, $\lambda_n$, $\eps_n$ and $\delta_n$. We have:
\beq
\xymatrix{
[n-1] \ar[d]_{d^i} \ar[r]^{M} & M_{n-1} \nonumber \\
[n] \ar[r]_{M} &M_n \ar[u]_{M(d^i) = d_i} }
\eeq
where $d_i \sigma_n^{\alpha} = (d_i \sigma^{\alpha})[n-1] = d^{i*}\sigma^{\alpha}[n-1] = \sigma^{\alpha} \circ d^i[n-1]$. $\delta$ is a natural transformation of simplicial objects, breaking into $\delta_0: M_{*0} \rarr M_{*1}$ and $\delta_1: M_{*1} \rarr M_{*0}$ with:
\beq
\xymatrix{
M_{n-1,0} \ar[r]^{\delta_{n-1,0}} & M_{n-1,1} \nonumber \\
M_{n,0} \ar[u]^{d_i} \ar[r]_{\delta_{n,0}} & M_{n,1} \ar[u]_{d_i}}
\eeq
as well as:
\beq
\xymatrix{
M_{n-1,0} \ar[d]_{s_j} \ar[r]^{\delta_{n-1,0}} & M_{n-1,1} \ar[d]_{s_j} \nonumber\\
M_{n,0}  \ar[r]_{\delta_{n,0}} & M_{n,1} }
\eeq
with $s_j$ the degeneracy map, defined by:
\beq
\xymatrix{
[n-1] \ar[r]^M &M_{n-1} \ar[d]^{M(s^j) = s_j}  \nonumber\\
[n] \ar[u]^{s^j} \ar[r]_M & M_n }
\eeq
where $d^i$ and $s^j$ are the usual connecting maps on $\Delta$ (\cite{GoJa}). We have similar commutative diagrams for $\delta_{*1}$. In all those, we have:
\begin{align}
\delta_{n,0}\sigma_n^{\alpha} &= \eps_n^T \rho_n^{\alpha} \theta_n \nonumber \\
\delta_{n,1} \theta^A_n &= \lambda^T_n \gamma^A_n \sigma_n \nonumber
\end{align}
with $\eps$, $\lambda$, $\rho$, $\gamma$ fixed, independently of the coordinates $\sigma$ and $\theta$, and:
\begin{align}
	\delta_n^* \Psi(\sigma_n, \theta_n) &=(\delta_{n,1}^* \Psi_0)(\theta_n) \oP (\delta_{n,0}^* \Psi_1)(\sigma_n) \nonumber \\
	&= \Psi_0(\delta_{n,1}\theta_n) \oP \Psi_1(\delta_{n,0} \sigma_n) \nonumber \\
	&= \Psi_0(\lambda_n^T \gamma_n^A \sigma_n) \oP \Psi_1(\eps_n^T \rho_n^{\alpha} \theta_n) \nonumber
\end{align}
the collection of which defines:
\beq
\delta^* F(A) = \{ \delta^* \Psi(\sigma, \theta) \; | \; \sigma \in A_0, \theta \in A_1 \} \nonumber
\eeq
thereby defining:
\begin{align}
\delta^*: \Topstdot &\rarr \Topstdot \nonumber \\
F(A) & \mapsto \delta^*F(A) \nonumber
\end{align}
objectwise presentation of:
\begin{align}
\delta^*: \widehat{\dksAff_*} & \rarr \widehat{\dksAff_*} \nonumber \\
F & \mapsto \delta^* F \nonumber
\end{align}
with $(\delta^*F)(A) = \delta^* F(A)$. If $\delta^*$ commutes with the left exact localization $\widehat{\dksAff_*} \rarr \dksAfftet_*$, then $F$ is referred to as a supersymmetric stack. In so doing, we preserve the physical origin of such a functor: to say that it is supersymmetric is in reference to the supersymmetry transformation in the base, that is the context is supersymmetric insofar as we have supersymmetri transformations, not that $F$ itself transforms in a symmetric manner under such transformations.

\end{document}